\documentclass{article}

\PassOptionsToPackage{round}{natbib}
\usepackage[preprint]{neurips_2026}

\usepackage[utf8]{inputenc} % allow utf-8 input
\usepackage[T1]{fontenc}    % use 8-bit T1 fonts
\usepackage{hyperref}       % hyperlinks
\usepackage{url}            % simple URL typesetting
\usepackage{booktabs}       % professional-quality tables
\usepackage{amsfonts}       % blackboard math symbols
\usepackage{nicefrac}       % compact symbols for 1/2, etc.
\usepackage{microtype}      % microtypography
\usepackage{xcolor}         % colors

\title{Curvature-Dependent Lower Bounds for Frank--Wolfe}

\author{%
  Jannis Halbey\\
  Zuse Institute Berlin \& TU Berlin\\
  Berlin, Germany \\
  \texttt{halbey@zib.de}
  \And
  Christophe Roux\\
  Zuse Institute Berlin \& TU Berlin\\
  Berlin, Germany \\
  \texttt{roux@zib.de}
  \And
  Sebastian Pokutta\\
  Zuse Institute Berlin \& TU Berlin\\
  Berlin, Germany \\
  \texttt{pokutta@zib.de}
}

\usepackage{amsmath,amssymb,amsthm,mathtools}
\usepackage{aliascnt}
\usepackage{booktabs}
\usepackage{graphicx}
\usepackage{subcaption}
\usepackage{xcolor}
\definecolor{darkblue}{RGB}{0,0,139}
\usepackage{hyperref}
\hypersetup{
  colorlinks=true,
  linkcolor=darkblue,
  citecolor=darkblue,
  urlcolor=darkblue
}
\usepackage[most]{tcolorbox}

\usepackage{thmtools, thm-restate}
\usepackage{algorithm}
\usepackage{algpseudocode}
\declaretheorem{theorem}
\declaretheorem[sibling=theorem]{lemma}

\declaretheorem[sibling=theorem]{proposition}
\declaretheorem[sibling=theorem]{corollary}
\declaretheorem[sibling=theorem]{definition}

\usepackage[capitalize,noabbrev,nameinlink]{cleveref}
\newcommand{\R}{\mathbb{R}}
\newcommand{\C}{\mathcal{C}}

\DeclareMathOperator{\spann}{span}
\DeclareMathOperator{\sign}{sign}
\DeclareMathOperator*{\argmin}{argmin}
\DeclareMathOperator{\supp}{supp}
\newcommand{\norm}[1]{\|#1\|}
\newcommand{\ip}[1]{\langle #1 \rangle}

\newcommand{\bigo}[1]{\mathcal{O}( #1 )}

\newcommand{\bigomega}[1]{\Omega( #1 )}

\newcommand\linktoproof[1]{{\hspace{-0.3em}\normalfont[{\hyperlink{proof:#1}{$\downarrow$}}]}}
\newcommand{\defi}{\stackrel{\mathrm{\scriptscriptstyle def}}{=}}

\newcommand{\newtarget}[1]{\hypertarget{#1}{}}
\newcommand\linkofproof[1]{(\cref{#1})\newtarget{proof:#1}\ }

\usepackage{todonotes}

\usepackage[acronym]{glossaries}
\newacronym{fw}{FW}{Frank--Wolfe}
\newacronym{lmo}{LMO}{Linear Minimization Oracle}
\newacronym{heb}{HEB}{H\"olderian Error Bound}

\begin{document}

\maketitle

\begin{abstract}
% The \gls{fw} algorithm achieves a convergence rate of $\bigo{1/T}$ on smooth convex objectives over compact convex sets and accelerates to $\bigo{1/T^2}$ when both the feasible set and the objective are strongly convex.
% \citet{kerdreux2021projection} established improved convergence rates of $\bigo{T^{-p/(p-1)}}$ when the feasible set is $p$-uniformly convex, which subsume strongly convex sets and ($p=2$) and exemplified by $\ell_p$ balls.
% Uniformly convex sets non-trivially subsume strongly convex sets and form a large variety of curved convex sets . For instance, the `p-balls are uniformly convex for all p > 1, but strongly convex for p ∈]1, 2] only. We show that these sets systematically induce accelerated convergence rates for the original \gls{fw} algorithm, which continuously interpolate between known rates.
% We show that these rates are tight for every $p\ge 3$ when using exact line search or short steps.
% Further, we extend the lower bound from strongly convex functions to function satisfying a Hölderian error bounds.
% Note that the lower bounds are not limited to the high-dimensional setting, as they are based on analyzing the dynamics of the \gls{fw} iterates on a simple function instead of information-theoretic arguments.
The \gls{fw} algorithm achieves a convergence rate of $\bigo{1/T}$ for smooth convex optimization over compact convex domains, accelerating to $\bigo{1/T^2}$ when both the objective and the feasible set are strongly convex. This acceleration extends beyond strong convexity: \citet{kerdreux2021projection} proved rates of $\bigo{T^{-p/(p-1)}}$ over $p$-uniformly convex feasible sets, a class that interpolates between strongly convex sets and more general curved domains such as $\ell_p$ balls. In this work, we establish a matching $\bigomega{T^{-p/(p-1)}}$ lower bound for every $p\ge 3$ under exact line search or short steps, and extend the lower bound to objectives satisfying a Hölderian error bound. The proofs analyze the dynamics of \gls{fw} iterates on simple instances and hence are not limited to the high-dimensional setting, unlike information-theoretic lower bounds.
\end{abstract}

\section{Introduction}
The \acrfull{fw} algorithm \citep{frank1956algorithm} is designed to solve constrained optimization problems of the form
\begin{equation*}
  \min_{x\in\C} f(x),
\end{equation*}
using only access to a \gls{lmo}. Here $f:\R^d\to \R$ is a smooth and convex function and $\C\subseteq \R^d$ is a compact and convex set.

The classical $\bigo{1/T}$ rate \citep{jaggi13revisiting} is tight for general compact convex $\C $ \citep{canon1968tight,lan2013complexity}, but stronger structural assumptions yield faster rates.
\citet{levitin1966constrained} and \citet{wolfe1970convergence} obtained linear rates under conditions on the position of the unconstrained optimizer, and \citet{garber15faster} obtained the global rate $\bigo{1/T^2}$ when both the set and the objective are strongly convex.
\citet{halbey2026lower} recently showed that it is tight for \gls{fw} with exact line search or short steps, and \citet{grimmer2026uniform} obtained a matching $\bigomega{1/T^2}$ bound for a broader class of \gls{lmo}-based methods in the high-dimensional regime.

\citet{kerdreux2021projection} extended the upper bound picture from strongly convex sets to uniformly convex sets, and from strongly convex objectives to objectives satisfying a \gls{heb}, deriving a family of rates that interpolate between $\bigo{1/T}$ and $\bigo{1/T^2}$.
Whether these interpolating rates are tight has remained open.
In this work we resolve the question partially by establishing a matching lower bound for \gls{heb} objectives with $\theta=\tfrac{1}{2}$ on uniformly convex sets, and a looser lower bound for general $\theta \in (0, \tfrac{1}{2})$.

Both lower bounds follow from the same recipe. We first minimize a quadratic objective over the $\ell_p$ ball and exhibit an explicit \emph{slow-start} initialization such that exact-line-search \gls{fw} iterates remain pinned to a particular slow trajectory.
Tracking the contraction along this trajectory recovers the predicted rate.
We then replace this quadratic objective with a richer family of objectives and apply the same argument to obtain the lower bound for the $(\mu,\theta)$-\gls{heb} regime.

Our lower bound results are closely related to the one of \citet{halbey2026lower}.
We use a similar model setup, replacing the Euclidean ball with the unit $\ell_p$ ball, and obtain the lower bound by analyzing specific \gls{fw} trajectories.
However, the method for initializing the starting point and the proof technique differ significantly from that used by \citet{halbey2026lower}.
We give a detailed comparison at the end of \Cref{sec:lower-bound-uc}.

The proof of the main result in this paper (\cref{thm:lower-bound}) was found using the agentic research framework that we introduced in a companion preprint \citep{zimmer2026agentic}, which delegates well-scoped research subtasks to LLM agents under structured human supervision.
Section~4.4 of \citet{zimmer2026agentic} states \Cref{thm:lower-bound} without proof, as one of the case studies that motivated the framework.
The present paper closes that gap by providing the precise theorem statement, the asymptotic constants, the full proof, the \gls{heb} extension (\Cref{thm:extended-lower-bound}), and the numerical validation. \Cref{sec:discussion} discusses which steps were agent-led, which were load-bearing for human judgement, and the failure modes we encountered.

\paragraph{Contributions.}
\begin{itemize}
  \item For every $p\ge 3$, we prove a lower bound of $\bigomega{T^{-p/(p-1)}}$ for \gls{fw} with exact line search or short steps on smooth strongly convex objectives over $p$-uniformly convex sets, matching the upper bound of \citet{kerdreux2021projection,pokutta2026frankwolfe} and resolving the tightness question in this regime (\Cref{thm:lower-bound}).
  \item For $(\mu,\theta)$-\gls{heb} objectives on $p$-uniformly convex sets with $p\geq 3$ and $\theta \in (0, 1/2]$, we establish a lower bound of $\bigomega{T^{-p/(2\theta(p-1))}}$ (\Cref{thm:extended-lower-bound}).
  \item We document our methodology for developing the proofs with agentic AI assistance, contributing a case study to the emerging practice of AI-assisted proof discovery (\Cref{sec:discussion}).
\end{itemize}

\paragraph{Related work.}
\Cref{tab:rates} positions our results among the rates known for vanilla \gls{fw} with short steps or an exact line search. We discuss the four axes that govern the picture (curvature of $\C $, regularity of $f$, position of the optimizer, and step-size rule) in turn.

\emph{Upper bounds.}
The earliest accelerations rely on the position of the optimizer rather than uniform set geometry.
\citet{wolfe1970convergence} proved linear convergence whenever $f$ is strongly convex and $x^*$ lies in the interior of $\C $, with no curvature assumption on $\C $.
\citet{levitin1966constrained} proved linear convergence on a strongly convex $\C $ whenever $\norm{\nabla f}_\ast$ is bounded below on $\C $ (equivalently, the unconstrained optimum lies outside $\C $).
\citet{garber15faster} removed the position assumption and obtained the uniform $\bigo{1/T^2}$ rate when $\C $ and $f$ are both strongly convex.
\citet{kerdreux21affine} recovered this rate from an affine-invariant analysis, and \citet{wirth23acceleration} extended it to open-loop step sizes.
\citet{kerdreux2021projection} proved $\bigo{T^{-p/(p-1)}}$ rates on $p$-uniformly convex sets and refined them under \gls{heb} on $f$.
\citet{pokutta2026frankwolfe} introduced \emph{local dual sharpness}, an oracle-geometry condition implied by uniform convexity, and proved the first unconditional $o(1/T)$ rate for vanilla \gls{fw} on such sets with matching quantitative tails under a local \gls{heb}.

\emph{Lower bounds.}
The classical $\bigomega{1/T}$ bounds \citep{canon1968tight,lan2013complexity} are built on polytopal or general convex constraint sets and do not apply to curved sets.
Until recently, it was open whether the rates obtained by \citet{garber15faster} and \citet{kerdreux2021projection} are tight.
\citet{halbey2026lower} showed a matching lower bound for strongly convex functions and sets by analyzing the trajectory of the \gls{fw} iterates on a quadratic on the Euclidean ball.
Concurrently, \citet{grimmer2026uniform} proved a matching $\bigomega{1/T^2}$ for the broader class of \gls{lmo}-span methods via a high-dimensional zero-chain construction.
The two results are complementary: \citet{grimmer2026uniform} covers a wider algorithm class but is restricted to the regime $T<d$, and its extension to smooth sets applies only to \emph{modestly} smooth sets whose smoothness constant scales with the target precision as $L=\Theta(1/\sqrt{\varepsilon})$.
While \citet{halbey2026lower} targets a narrower class of methods, it provides a dimension-independent result. 

\emph{Adjacent settings.}
Linear convergence of \gls{fw} variants on polytopes (away-step, pairwise, and fully-corrective methods, see \citet{braun2025conditional} for an overview) relies on polyhedral geometry rather than set curvature and is orthogonal to our setting.

\begin{table}[t]
\centering
\caption{Upper and lower bounds on convergence rates for \gls{fw} with short steps or exact line search. All entries assume that the constraint set $\C$ is convex and compact and that $f$ is convex and smooth. The abbreviation SC refers to strongly convex and UC to uniformly convex. The entries in column $\C$ and $f$ denote the additional assumptions on the constraint set and the function, respectively.\\}
\label{tab:rates}
\begin{tabular}{lccc}
\toprule
Work & $\C $ & $f$ & Result \\
\midrule
\citet{jaggi13revisiting}                            & --              &   --              & $\bigo{T^{-1}}$ \\
\citet{lan2013complexity}             & --              & --                & $\bigomega{T^{-1}}$ \\
\midrule
\citet{kerdreux2021projection} & $p$-UC           & SC                    & $\bigo{T^{-\frac{p}{p-1}}}$ \\
\Cref{thm:lower-bound} (ours)                        & $p$-UC, $p\ge 3$ & SC                    & $\bigomega{T^{-\frac{p}{p-1}}}$ \\
\citet{kerdreux2021projection}                       & $p$-UC           & $\theta$-\gls{heb}    & $\bigo{T^{-\frac{p}{p-2\theta}}}$ \\
\Cref{thm:extended-lower-bound} (ours)                  & $p$-UC, $p\ge 3$ & $\theta$-\gls{heb}  & $\bigomega{T^{-\frac{p}{2\theta(p-1)}}}$ \\
\midrule
\citet{garber15faster}                               & SC               & SC                    & $\bigo{T^{-2}}$ \\
\citet{halbey2026lower,grimmer2026uniform}           & SC               & SC                    & $\bigomega{T^{-2}}$ \\
\bottomrule
\end{tabular}
\end{table}

\paragraph{Preliminaries.}
We work in a finite-dimensional normed space $(\R^d,\norm{\cdot})$. 
The support of a vector $x \in \R^d$ is defined as $\supp(x) \defi  \{ i \in [d]: x_i \neq 0 \}$.
A differentiable function is \emph{$\mu$-strongly convex} if
\[
    f(x) \geq f(y) + \ip{\nabla f(y), x-y} + \frac{\mu}{2}\norm{x-y}^2
\]
for all $x,y \in \R^d$ and \emph{$L$-smooth} if
\[
    f(x) \leq f(y) + \ip{\nabla f(y), x-y} + \frac{L}{2}\norm{x-y}^2.
\]
For a smooth convex objective $f:\C \to\R$ on a compact convex set $\C \subset\R^d$, we denote an optimizer by
$x^*\in\arg\min_{x\in \C }f(x)$ and the primal suboptimality by $h_t\defi f(x_t)-f(x^*)$.
Further, $B_p$ denotes the unit $\ell_p$ ball, i.e., $ B_p = \{x \in \R^d : \norm{x}_p \leq 1\}$.

\begin{definition}[Uniformly convex set {\citep{kerdreux2021projection}}]
    Let $\C \subset\R^d$ be closed and $\gamma_\C :\R_+\to\R_+$ be nondecreasing.
    We say that $\C $ is $\gamma_\C $-uniformly convex (with respect to $\norm{\cdot}$) if
    for all $x,y\in \C $, all $\eta\in[0,1]$, and all $z\in\R^d$ with $\norm{z}=1$,
    \[
    \eta x + (1-\eta)y + \eta(1-\eta)\gamma_\C (\norm{x-y})\,z \in \C .
    \]
    If there exist $\alpha>0$ and $q\ge 2$ such that
    $\gamma_\C (r)\ge \alpha r^q$ for all $r\ge 0$, then $\C $ is called
    $(\alpha,q)$-uniformly convex (or $q$-uniformly convex).
\end{definition}

\begin{definition}[H\"olderian error bound {\citep{kerdreux2021projection}}]
    Let $f$ be strictly convex on $\C $, and let $x^*=\arg\min_{x\in \C }f(x)$.
    We say that $f$ is $(\mu,\theta)$-\gls{heb} (or $\theta$-\gls{heb} for short) if there exist
    $\mu>0$ and $\theta\in[0,\tfrac12]$ such that
    \[
    \norm{x-x^*}\le \mu\bigl(f(x)-f(x^*)\bigr)^\theta,\qquad \forall x\in \C .
    \]
\end{definition}
%% ====================================================================
\section{Lower bound for uniformly convex sets}\label{sec:lower-bound-uc}
%% ====================================================================

In this section, we establish lower bounds on the convergence rate of \gls{fw} on $p$-uniformly convex constraint sets with $p \geq 3$.
We consider the problem of projecting the point $e_1 = (1,0,\dots,0)$ onto the unit $\ell_p$ ball for $p\geq 3$,
\begin{equation}\label{eq:model}
    \min_{x\in B_p}\; f(x) \defi  \norm{x-e_1}^2.
\end{equation}
The quadratic objective is smooth and strongly convex with matching constants $L=\mu=2$.
Given the unique minimizer $x^* = e_1$, the function $f$ also satisfies the \gls{heb} for $\theta = \frac12$.
Furthermore, the constraint set is $p$-uniformly convex.
Despite its simplicity, this instance turns out to be fundamentally challenging for the \gls{fw} algorithm.
Before deriving the lower bound, we briefly recall the \gls{fw} method in \cref{alg:fw}, with the two step-size rules considered in this work: an exact line search and the short step rule.
\begin{algorithm}[H]
    \caption{\gls{fw} with exact line search or short step \label{alg:fw}}
    \begin{algorithmic}[1]
        \Require initial point $x_0\in\C $; smoothness constant $L$ of $f$ (short step only).
        \For{$t=0,1,2,\dots$}
        \State $v_t \in \arg\min_{v\in \C }\ip{\nabla f(x_t),v}$
        \State Choose step size $\gamma_t\in[0,1]$ by either rule:
        \Statex \qquad\quad $\gamma_t\in\arg\min_{\gamma\in[0,1]} f\bigl(x_t+\gamma(v_t-x_t)\bigr)$ \Comment{exact line search}
        \Statex \qquad\quad  $\gamma_t = \min\!\left\{1,\ \dfrac{\ip{\nabla f(x_t),x_t-v_t}}{L\,\norm{x_t-v_t}_2^2}\right\}$ \Comment{short step}
        \State $x_{t+1}=x_t+\gamma_t(v_t-x_t)$
        \EndFor
    \end{algorithmic}
\end{algorithm}

As a first step we consider the \gls{lmo} on the $\ell_p$ ball.
% By H\"older duality, the oracle solution is obtained by normalizing the signed coordinate-wise powers of the gradient.
Let $q=p/(p-1)$ be the H\"older conjugate, then for any nonzero direction $g\in\R^d$, the \gls{lmo} on $B_p$ produces the unique minimizer $v$ with the closed form
\begin{equation}
    v_i = -\frac{\operatorname{sign}(g_i)\,|g_i|^{q-1}}{\norm{g}_q^{q-1}},
    \qquad i=1,\dots,d.
    \label{eq:lmo-lp}
\end{equation}
The proof is given in \Cref{lem:lmo-lp} in \Cref{sec:proofs}.
\Cref{eq:lmo-lp} immediately reveals that the \gls{lmo} preserves the support, i.e., $\supp(v) = \supp(g)$.
Together with the fact that \gls{fw} iterates are always convex combinations of the extreme points of the oracle and the previous iterates, we get the following confinement result.
% For problem \eqref{eq:model}, $\nabla f(x_t)=2(x_t-e_1)$, so if $x_t\in\spann\{e_1,e_2\}$ then $\nabla f(x_t)$ is supported only on coordinates $1$ and $2$, hence the same holds for $v_t$.
% Since the \gls{fw} update in \Cref{alg:fw} is a convex combination of $x_t$ and $v_t$, every iterate stays in $\spann\{e_1,e_2\}$.
\begin{restatable}{proposition}{confinementProposition}\label{prop:confinement}\linktoproof{prop:confinement}
    Let $(x_t)_{t\ge 0}$ be the sequence generated by \gls{fw} in \Cref{alg:fw} for problem \eqref{eq:model}.
    If $x_0\in\spann\{e_1,e_2\}$, then $x_t\in\spann\{e_1,e_2\}$ for every $t\ge 0$.
\end{restatable}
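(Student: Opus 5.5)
The plan is induction on $t$, with the base case $x_0\in\spann\{e_1,e_2\}$ given by hypothesis. For the inductive step, assume $x_t\in\spann\{e_1,e_2\}$. The goal is to show that both the oracle output $v_t$ and the next iterate $x_{t+1}$ lie in this span.

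First I compute the gradient of the model objective \eqref{eq:model}: $g\defi\nabla f(x_t)=2(x_t-e_1)$. Both $x_t$ and $e_1$ lie in $\spann\{e_1,e_2\}$, so $g_i=0$ for every $i\ge 3$.

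Next I split into two cases according to whether $g$ vanishes.

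\emph{Case $g\neq 0$.} The closed form \eqref{eq:lmo-lp} for the \gls{lmo} on $B_p$ (\Cref{lem:lmo-lp}) gives the unique minimizer $v_t$, with $v_{t,i}$ proportional to $\sign(g_i)|g_i|^{q-1}$. Since $q-1=1/(p-1)>0$, $g_i=0$ forces $v_{t,i}=0$. Hence $\supp(v_t)\subseteq\supp(g)\subseteq\{1,2\}$, so $v_t\in\spann\{e_1,e_2\}$.

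\emph{Case $g=0$.} Then $x_t=e_1=x^*$. The \gls{lmo} is no longer unique, and $v_t$ could be any point of $B_p$. However, the step size is still $0$ in either rule:
\begin{itemize}
  \item For the short step, the numerator $\ip{\nabla f(x_t),x_t-v_t}$ is $0$, so $\gamma_t=0$ whenever $v_t\neq x_t$.
  \item For exact line search, $x_t$ is the global minimizer of $f$, so $\gamma=0$ is optimal. Indeed, for $v_t\neq x_t$ it is the unique minimizer, since $f$ is strictly convex along the segment.
\end{itemize}
Hence $x_{t+1}=x_t$. This degenerate case is the main subtlety I expect, and I will handle it with exactly this explicit remark.

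Finally, in either case, $x_{t+1}=(1-\gamma_t)x_t+\gamma_t v_t$ is a linear combination of vectors in the subspace $\spann\{e_1,e_2\}$. Therefore $x_{t+1}$ lies in it, regardless of the value of $\gamma_t\in[0,1]$ chosen by either rule. This completes the induction.
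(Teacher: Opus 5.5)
Your proposal is correct and follows the paper's argument: the gradient $2(x_t-e_1)$ is supported on coordinates $1,2$, the closed-form \gls{lmo} from \Cref{lem:lmo-lp} preserves support, and the convex-combination update stays in $\spann\{e_1,e_2\}$. Your separate handling of the stationary case $x_t=e_1$ is a small improvement. There $\nabla f(x_t)=0$, \Cref{lem:lmo-lp} does not apply, and the oracle output is non-unique; you correctly show $x_{t+1}=x_t$, a case the paper's proof leaves implicit.
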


Thus the problem dynamics can be described in a two-dimensional subspace whenever we initialize in $\spann\{e_1,e_2\}$.
Therefore, we focus on $d=2$ without loss of generality.

\subsection{Construction of the slow initialization}

To analyze the local behavior near the optimizer $e_1$, it is convenient to separate the error along the first coordinate from the transverse component along $e_2$.
We therefore define centered coordinates
\[
    u\defi 1-x_1,\qquad w\defi x_2.
\]
Combining the definition of the \gls{lmo} in \eqref{eq:lmo-lp} with exact line search on the quadratic objective yields an explicit one-step map $(u,w)\mapsto(u',w')$.
We characterize the resulting update in closed form in \Cref{prop:uw-dynamics} in \Cref{sec:proofs}.

The trajectories in \Cref{fig:uw-random-trajectories}, obtained from four random initializations in the $(u,w)$-plane, show that the distance to the optimum is dominated by the first coordinate $u$.
In the same runs, the transverse variable $w$ changes sign across iterations and decays faster than $u$, so the iterates quickly oscillate around the $u$-axis while drifting slowly toward the origin.

\begin{figure}[ht]
    \centering
    \includegraphics[width=0.5\linewidth]{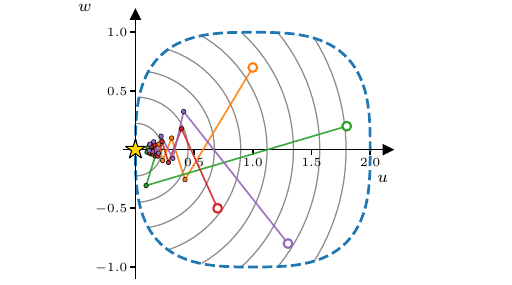}
    \caption{Exact-line-search \gls{fw} trajectories in the $(u,w)$-plane for four random initializations for the projection problem \eqref{eq:model}, i.e., $\min_{x\in B_p}\norm{x-e_1}^2$ with $p\ge 3$.
    Across runs, $u$ remains positive and controls the distance to the optimum, while $w$ alternates sign and contracts faster.}
    \label{fig:uw-random-trajectories}
\end{figure}
Therefore, we aim to derive a sharp asymptotic lower bound by expanding the one-step dynamics in powers of the dominant coordinate $u$.
The remaining question is how to scale the transverse coordinate $w$ relative to $u$.
Different approach directions $(u,w)\to(0,0)$ produce different leading orders of the \gls{fw} quantities like the step size.
So an expansion in $u$ is not uniform unless we fix a scaling regime for $w$.
To disregard sign changes and capture the faster convergence of the transverse coordinate $w$, we introduce the scaled variable
\begin{equation}
    y \defi \frac{|w|}{u^{1+\alpha}},
    \label{eq:scaled-vars}
\end{equation}
where $\alpha=(p-1)/p = 1/q$ is the reciprocal of the H\"older conjugate exponent.
We will later see that this choice of the exponent $1 + \alpha$ is critical for the lower-bound construction.
Next we consider the one-step update of the scaled variable $y$,
\begin{equation}
  \label{eq:y-one-step}
    y' = \Phi(u,y) \defi \frac{|w'|}{(u')^{1+\alpha}},
\end{equation}
where $u'$ and $w'$ are the new values of $u$ and $w$ after one \gls{fw} update.
% In \cref{lem:y-map} in \Cref{sec:proofs} we show that the one-step map $\Phi(u,y)$ admits the following expansion as $u \downarrow 0$
% \[
%     \Phi(u,y) = F(y) + u G(y) + O(u^\kappa),
% \]
% where $F(y) = y^{-(q-1)}-\frac{y}{p}$, $G(y) = \frac{y}{p}+\frac{p-1}{2p^2}\,y^{q+1}$ and $\kappa=2\alpha=2/q$. 
% Therefore the scaled variable $y$ has a constant leading-order term so $y = O(1)$ as $u \downarrow 0$.
% Note for different exponents in \eqref{eq:scaled-vars} the leading-order term of $y'$ would depend on $u$.
% In the next step, we consider the fixed point of the leading-order map $F$ to prove that the scaled variable $y$ converges to a constant as $u \downarrow 0$.
% On the interval $I$, the function $F$ has a unique fixed point at
% \[
%     y = C_p = \left(\frac{p}{p+1}\right)^{1/q}.
% \]
% Since $\Phi(0,y) = F(y)$, this fixed point is also a fixed point of the one-step map $\Phi(u,y)$ at $u=0$.
% We now extend this single fixed point to a continuous fixed point map $y^*(u)$ by the implicit function theorem.
Next we derive a fixed point map $y^*(u)$ for the function $\Phi(u,y)$ using the implicit function theorem.
There exist $u_0>0$ and a map $y^*:[0,u_0]\to(0,\infty)$ that satisfies $\Phi(u,y^*(u)) = y^*(u)$.
We show in \cref{prop:z-drift} in \Cref{sec:proofs} that the map $y^*(u)$ is Lipschitz continuous on $[0,u_0]$ and $C^1$ on $(0,u_0]$ and expands as 
\[
    y^*(u) = C_p + D_p u + \bigo{u^\kappa},
\]
for $u \downarrow 0$. The exact definitions of the constants $C_p$ and $D_p$ are given in \cref{prop:z-drift}.
This first-order description directly motivates an explicit initialization.
For small $u_0>0$, we set
\begin{equation}
    \label{eq:slow-start-init}
    x_0^{\mathrm{slow}}(u_0)\defi(1-u_0)e_1+y_0\,u_0^{1+\alpha}e_2,
    \qquad
    y_0 \defi C_p+D_pu_0.
\end{equation}
That is, we initialize on the first-order approximation of the fixed-point curve $y^*(u)$.
The chosen starting point is a feasible interior point of $B_p$ for small $u_0$ (see \Cref{lem:slow-start-feasible} in \Cref{sec:proofs}).
\Cref{fig:coordinate-systems} illustrates the resulting trajectory in the original coordinates $(x_1,x_2)$, the recentered coordinates $(u,w)$, and in the scaled variables $(u,y)$.
The last one reveals stable contraction of $y_t$ towards the curve $y^*(u)$.
\begin{figure}[H]
    \centering
    \includegraphics{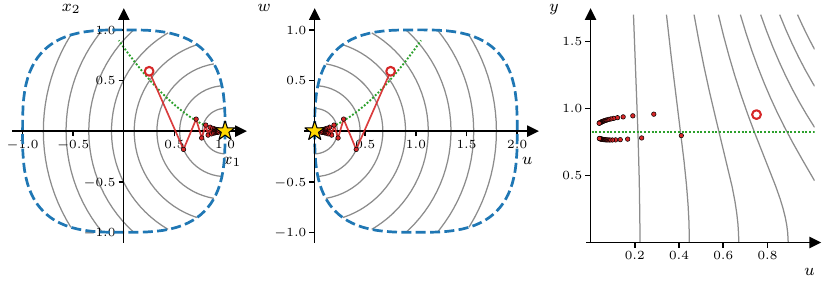}
    \caption{Three coordinate views of \gls{fw} with exact line search on $B_p$, $p=3$, for $f(x)=\norm{x-e_1}_2^2$, run from the slow-start initialization \eqref{eq:slow-start-init} with $u_0=\frac{3}{4}$ (open circle).
    Left: original coordinates $(x_1,x_2)$, with the optimum $e_1$ (gold star) and $\partial B_p$ (dashed).
    Middle: the recentered coordinates $(u,w)=(1-x_1,x_2)$ shift $e_1$ to the origin and turn $f$ into a quadratic centered at $0$, but the iterates still cluster in a tiny corner.
    Right: the scaled coordinates $(u,y)$ from \eqref{eq:scaled-vars} alternate around the slow curve $y\approx C_p$ (green dotted) which appears as a horizontal line.
    The scaling exposes the slow-curve structure that ultimately drives the $T^{-p/(p-1)}$ asymptotic rate.}
    \label{fig:coordinate-systems}
\end{figure}

In the next step we prove exactly this behavior, namely that the one-step map $\Phi(u,y)$ is contractive in the $y$-direction near $y^*(u)$.
\Cref{lem:phi-local-contraction} proves that for $p > 3$, there exist $u_0>0$ and $\lambda \in (0,1)$ such that
\[
    \left|\frac{\partial}{\partial y}\Phi(u,y)\right|\le \lambda
\]
if $y$ is close to $y^*(u)$, specifically $|y-y^*(u)|\le \delta$ for some $\delta>0$.
For $p=3$, one cannot bound the derivative by a uniform constant.
However, one can show that the derivative is bounded by $1-cu$ for some $c>0$ if $|y-y^*(u)|\le K_0 u^\kappa$ for some $K_0>0$:
\[
    \left|\frac{\partial}{\partial y}\Phi(u,y)\right|\le 1-cu.
\]
Hence, once initialized near the first-order profile of $y^*(u)$, the iterates are pulled back toward the slow curve rather than drifting away.
We formalize this quantitatively in \Cref{prop:slow-start-tracking}, showing
\begin{equation}
    \label{eq:tracking-estimate}
    |y_t-y^*(u_t)|=\bigo{u_t^\kappa}
    \qquad\text{for all }t\ge 0.
\end{equation}

Before stating the lower bound, we briefly give some remarks on the above results.
For the one-step $y$-map $\Phi(u,y)$ the case $p=3$ presents the boundary case where the contraction is not uniform.
At $p=3$, the contraction is only marginal but still quantifiable.
Numerical experiments show that for $2<p<3$, the contraction weakens further and the neighborhood of the $y^*(u)$ curve becomes more delicate.
We illustrate this phenomenon in \Cref{fig:heatmaps}, which compares heatmaps depicting the necessary iteration count to reach a fixed target accuracy of $10^{-4}$ for different values of $p$.

\begin{figure}[htbp]
    \centering
    \begin{subfigure}{0.24\linewidth}
        \includegraphics[width=\linewidth]{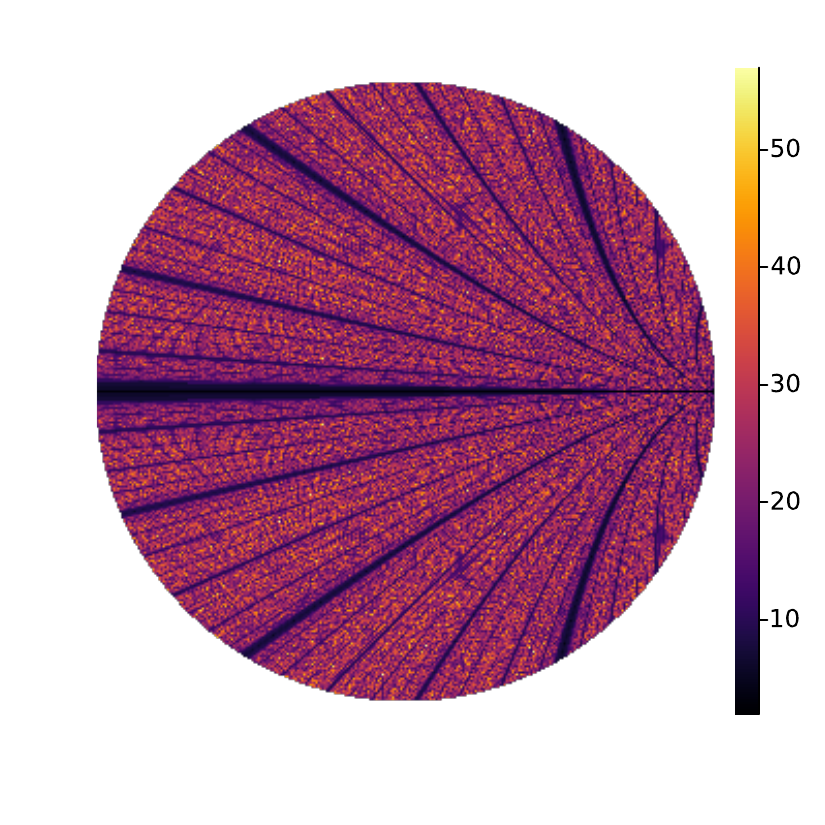}
        \caption{$p=2$}
        \label{fig:heatmap-2}
    \end{subfigure}
    \hfill
    \begin{subfigure}{0.24\linewidth}
        \includegraphics[width=\linewidth]{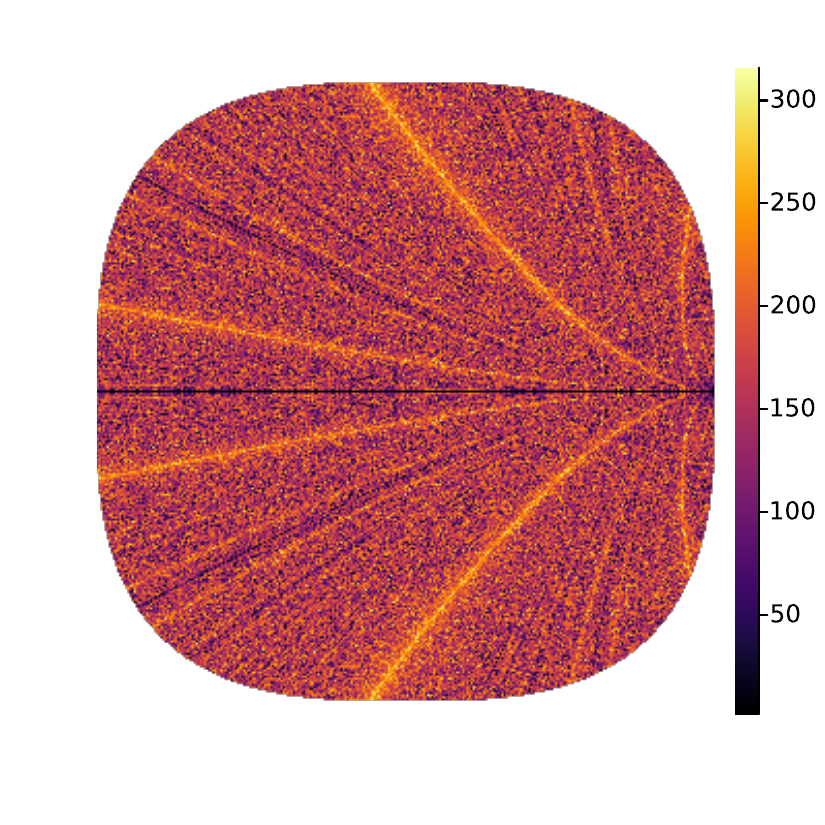}
        \caption{$p=2.95$}
        \label{fig:heatmap-2.95}
    \end{subfigure}
    \hfill
    \begin{subfigure}{0.24\linewidth}
        \includegraphics[width=\linewidth]{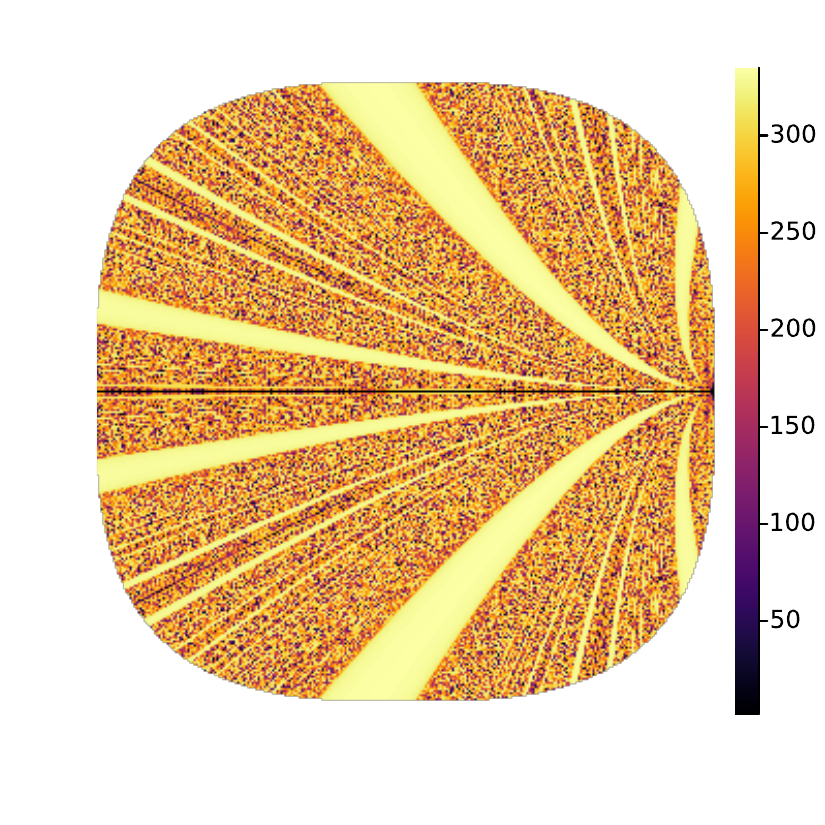}
        \caption{$p=3$}
        \label{fig:heatmap-3}
    \end{subfigure}
    \hfill
    \begin{subfigure}{0.24\linewidth}
        \includegraphics[width=\linewidth]{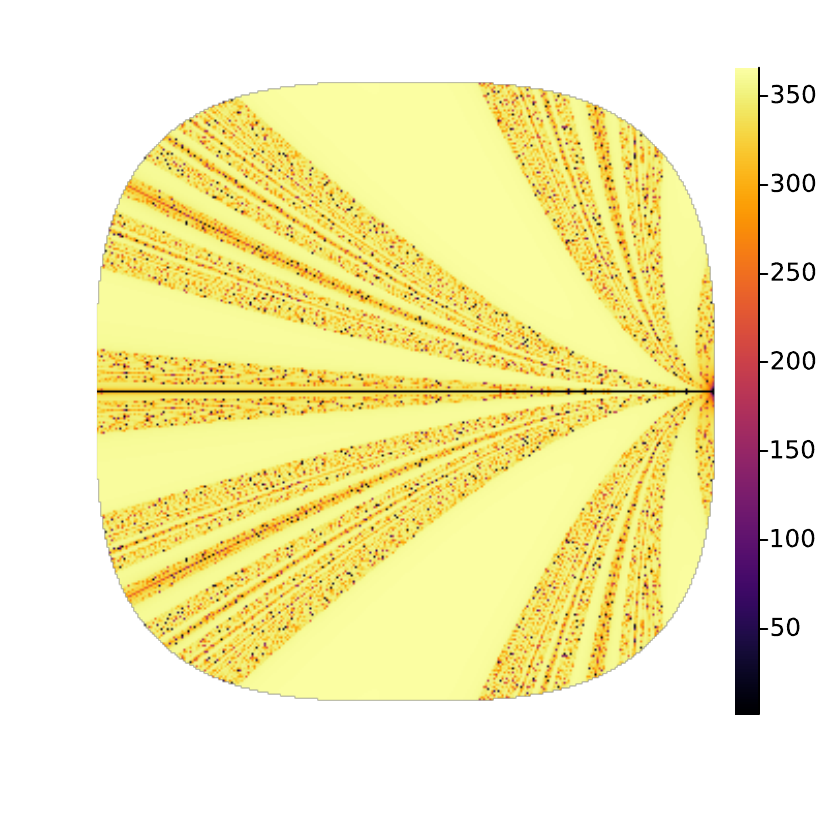}
        \caption{$p=3.1$}
        \label{fig:heatmap-3.1}
    \end{subfigure}
    \caption{
        Heatmaps showing the number of \gls{fw} iterations required to reach the target accuracy ($10^{-4}$),
        initialized from all feasible points in the open $p$-unit ball in $\mathbb{R}^2$.
        Each panel corresponds to a different value of $p$.
        Darker colors correspond to fewer iterations.
    }
    \label{fig:heatmaps}
\end{figure}

The heatmap for $p=3$ shows clear strips of high iteration counts, where the widest strips contain the fixed point curve $y^*(u)$.
The case of $p=3.1$ is exemplary for the behavior of $p>3$, showing that the strips become wider for larger $p$.
For $p<3$, here depicted by $p=2.95$, the strips become narrower making the region of high iteration counts harder to identify.
For the case of $p=2$, one cannot identify a clear strip of slow iterations, which is consistent with the results in \citet{halbey2026lower}, who studied the strongly convex case $p=2$.

\subsection{Establishing the lower bound}

We now derive the asymptotic convergence rate of the primal gap $h_t=f(x_t)-f(x^*)$ along the explicit slow-start trajectory.
For that we consider the residual distance $r_t = \|x_t-x^*\|$ = $\sqrt{h_t}$ and its one-step contraction factor $s_t$, defined as
\[
    s_t\defi\frac{r_{t+1}}{r_t}.
\]
The key input is the tracking estimate in \eqref{eq:tracking-estimate}, which keeps the iterates in the slow-curve regime where the expansions from the previous subsection are valid with controlled remainders.
In \Cref{prop:contraction-ap} we combine these expansions with an exact one-step decrease identity derived from the exact line search, which yields the asymptotic contraction law
\begin{equation}
    \label{eq:contraction-law}
    1-s_t = 2C_p^2\,r_t^\kappa(1+o(1)).
\end{equation}

With this we can finally assemble the lower bound result.
We consider the problem of minimizing the smooth and strongly convex objective $f(x) = \|x-e_1\|^2$ over the $p$-uniform convex set $B_p$.
\gls{fw} with an exact line search initialized at the explicit slow-start point $x_0^{\mathrm{slow}}(u_0)$ from \eqref{eq:slow-start-init} produces a trajectory, for which \Cref{prop:slow-start-tracking} guarantees persistent tracking of the slow curve.
Combined with \eqref{eq:contraction-law}, this yields a precise asymptotic recurrence for the primal gap along that trajectory.
The theorem then follows by summing this asymptotic recurrence, yielding the asymptotic constant and rate.

\begin{restatable}{theorem}{lowerBoundTheorem}\label{thm:lower-bound}\linktoproof{thm:lower-bound}
    For every $p\ge 3$, there exist a $p$-uniformly convex feasible set $\mathcal C \subset\mathbb R^d$ and a smooth and strongly convex function $f:\mathcal C \to\mathbb R$ such that, for a suitable initialization, exact-line-search \gls{fw} generates iterates $(x_t)_{t\geq 0}$ satisfying
    \begin{equation}\label{eq:lower-bound}
        f(x_T)-f(x^*)
        \;\sim\;
    \left(\frac{p+1}{p}\right)^2
    \left(\frac{p}{4(p-1)}\right)^{p/(p-1)}
    T^{-p/(p-1)}
        \qquad\text{as }T\to\infty.
    \end{equation}
    In particular, for any $T \geq 0$, we have $f(x_T)-f(x^*) = \Omega \left(T^{-p/(p-1)} \right)$.
\end{restatable}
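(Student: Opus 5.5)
We take $\C = B_p\subset\R^2$ and $f(x)=\norm{x-e_1}_2^2$ from \eqref{eq:model}. By \Cref{prop:confinement} this is without loss of generality for any $d\ge2$. The set $B_p$ is $p$-uniformly convex, and $f$ is $2$-smooth and $2$-strongly convex with $x^*=e_1$ and $h_t=r_t^2$. We initialize at $x_0^{\mathrm{slow}}(u_0)$ from \eqref{eq:slow-start-init} with $u_0$ small enough that \Cref{lem:slow-start-feasible}, \Cref{prop:z-drift}, \Cref{lem:phi-local-contraction} and \Cref{prop:slow-start-tracking} all apply. Under this choice the tracking estimate \eqref{eq:tracking-estimate} holds for all $t$, and hence so does the contraction law \eqref{eq:contraction-law} from \Cref{prop:contraction-ap}:
\[
1-s_t = 2C_p^2 r_t^\kappa(1+o(1)), \qquad \kappa = 2\alpha = \tfrac{2(p-1)}{p}.
\]
One point needs checking: that $r_t\to0$, so that $o(1)$ is meaningful along the trajectory. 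This follows from the standard $\bigo{1/t}$ upper bound for FW, or directly from $s_t<1$ combined with the contraction law. For short steps, note that on a quadratic with $L=2$ the short step coincides with exact line search whenever $\gamma_t<1$. Near the optimum $\gamma_t\to0$, so both rules produce the same trajectory.

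The main step converts the recurrence into an asymptotic for $r_t$. We set $a_t \defi r_t^{-\kappa}$. Then
\[
a_{t+1} = a_t s_t^{-\kappa} = a_t\bigl(1-2C_p^2 r_t^\kappa(1+o(1))\bigr)^{-\kappa} = a_t + 2\kappa C_p^2(1+o(1)).
\]
Here we used $r_t^\kappa a_t = 1$ and $r_t\to0$. By Stolz--Cesàro, $a_T/T\to 2\kappa C_p^2$, so
\[
r_T \sim (2\kappa C_p^2 T)^{-1/\kappa}, \qquad h_T = r_T^2 \sim (2\kappa C_p^2 T)^{-2/\kappa} = (2\kappa C_p^2)^{-p/(p-1)}T^{-p/(p-1)},
\]
using $2/\kappa = p/(p-1)$.

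It remains to match the constant. With $C_p=(p/(p+1))^{1/q}$ from \Cref{prop:z-drift}, we have $C_p^{2\cdot p/(p-1)} = C_p^{2q} = (p/(p+1))^2$. Moreover $2\kappa = 4(p-1)/p$, so $(2\kappa)^{-p/(p-1)} = (p/(4(p-1)))^{p/(p-1)}$. This reproduces \eqref{eq:lower-bound}. The $\Omega$ statement for every $T$ then follows from the asymptotic equivalence for large $T$, together with $h_T>0$ for all finite $T$. Positivity holds because the iterates never reach $e_1$ exactly: $u_t>0$ along the tracked trajectory.

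We expect the only real obstacle to be ensuring that the $o(1)$ in \eqref{eq:contraction-law} is uniform along the entire trajectory. This is precisely what \Cref{prop:slow-start-tracking} provides, and its $p=3$ case, where the contraction is only $1-cu$, is the delicate one. Since we invoke it as given, the remaining work is the bookkeeping above. In that bookkeeping the one subtlety is to justify the expansion $(1-\epsilon)^{-\kappa}=1+\kappa\epsilon+\bigo{\epsilon^2}$ with $\epsilon=\bigo{r_t^\kappa}$, so that the second-order error is absorbed into the $o(1)$ term.
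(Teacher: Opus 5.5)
Your proposal is correct and follows the paper's proof essentially verbatim. Your $a_t = r_t^{-\kappa}$ is exactly the paper's $\psi_t = h_t^{-\alpha}$, and the paper likewise expands $s_t^{-2\alpha}-1 = 2\alpha(1-s_t)+O((1-s_t)^2)$, uses $\psi_t r_t^\kappa = 1$ to get $\Delta\psi_t = \kappa a_p + o(1)$ with $a_p = 2C_p^2$, and sums to obtain $h_T \sim (\kappa a_p T)^{-p/(p-1)}$ (where the paper gets $u_t \downarrow 0$ from \Cref{prop:slow-start-tracking}).
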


This result shows that the order of the convergence rate proved by \citet{kerdreux2021projection} $\bigo{T^{-p/(p-2\theta)}}$ is tight up to constants in the strongly convex case, i.e., for $\theta = \frac12$.
Since the considered quadratic is isotropic, the exact line search and the short step rule coincide.
Thus, the lower bound holds for \gls{fw} with exact line search and for \gls{fw} with short steps.
Moreover, the example is constructed in $\R^2$ but can be extended to arbitrary dimension due to \cref{prop:confinement}.
So unlike the classical results from \citet{nemirovsky1985problem} or the more recent lower bound by \citet{grimmer2026uniform}, our result is not limited to the high-dimensional setting.

The above lower bound is closely related to the one by \citet{halbey2026lower}.
Both approaches consider a two-dimensional projection problem onto the Euclidean ball and the unit $\ell_p$ ball, respectively, and derive a lower bound for a specific initialization.
However, the initialization method as well as the overall proof differ significantly.
For $p=2$, the slow initialization requires very high precision.
Therefore, \citet{halbey2026lower} do not give an explicit formula for the start point $x_0$, but instead choose the end point $x_T$ close to the optimum and then run \gls{fw} backward to obtain a valid slow starting point $x_0$.
In contrast, we give an explicit formula for the initialization in \eqref{eq:slow-start-init}.
Furthermore, we show that the update of the scaled coordinate $y$ is contractive in a neighborhood of the fixed-point map $y^*(u)$.
Thus we prove the robust initialization, which can be observed in \cref{fig:heatmaps}.

Additionally, the proof of the lower-bound rate differs greatly between the two approaches.
\citet{halbey2026lower} derive an explicit recurrence for the distance residual $r_t = \|x_t - e_1\|_2$ and its contraction rate $s_t = \frac{r_{t+1}}{r_t}$ from the \gls{fw} dynamics.
Using that the \gls{fw} update is almost uniquely invertible on the Euclidean ball, they derive bounds on the contraction rates which they ultimately turn into a lower bound for the primal gap.
Our approach considers instead the scaled variables $u$ and $y$ and derives asymptotic expansions of their trajectories.
A crucial ingredient here is the contraction in the $y$ variable which allows us to bound the necessary remainder terms.
Substituting the expansions of $u$ and $y$ in the objective $f$ then yields bounds on the contraction rate and ultimately the primal gap.
Consequently, both approaches use completely different proof techniques, while their model setup and high-level results are quite similar.

Finally, even though the statement in \cref{thm:lower-bound} is only asymptotic, experiments show that the described convergence rate already occurs after a short initial phase.
\Cref{fig:primal-gap-theta-half} compares the slow-start trajectory to additional exact-line-search runs from generic initializations on $B_p$ for $p\in\{3,5\}$ in the strongly convex case $\theta=\tfrac12$, together with a scaled reference curve $T^{-p/(p-1)}$.
The slow initialization aligns with this slope after a short transient, illustrating that the asymptotic rate is roughly visible already for $t \geq 100$.
Finally, the generic initializations converge faster than the slow asymptotic rate.

\begin{figure}[ht]
    \centering
    \includegraphics[width=\linewidth]{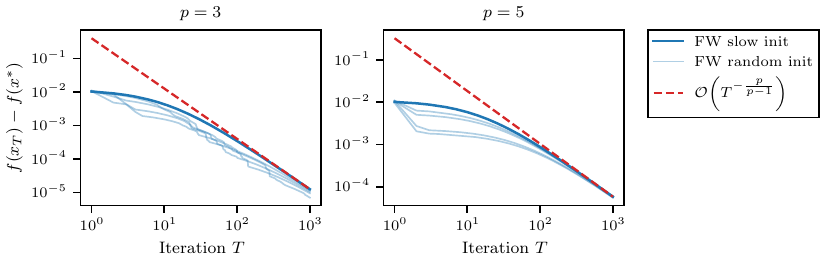}
    \caption{Comparison of the primal gap $f(x_T)-f(x^*)$ versus iteration $T$ for exact-line-search \gls{fw} on the minimization problem in \eqref{eq:model} for $p \in \{3,5\}$.
    The solid curve uses the slow initialization \eqref{eq:slow-start-init}, while the faint curves are runs from generic initializations.
    The dashed line shows $T^{-p/(p-1)}$, which matches both our lower bound from \cref{thm:lower-bound} and the upper bound rate of \citet{kerdreux2021projection}.}
    \label{fig:primal-gap-theta-half}
\end{figure}

%% ====================================================================
\section{Extension to HEB functions}\label{sec:sharp-extension}
%% ====================================================================

The goal of this section is to establish a matching lower bound counterpart to the general \gls{heb}-rate upper bound of \citet{kerdreux2021projection}.
Up to now, our explicit lower bound construction is for strongly convex objectives, i.e., the \gls{heb} case $\theta=\frac12$ in \Cref{thm:lower-bound}.

We transfer the previous model to general $\theta\in(0,\frac12]$ by applying a power transform to the objective in \eqref{eq:model}.
For any target \gls{heb} constant $\mu>0$, we consider
\begin{equation}
    \label{eq: def g}
    g(x)\defi \mu^{-1/\theta}\|x-e_1\|_2^{1/\theta}
    = \mu^{-1/\theta} f(x)^{1/(2\theta)}.
\end{equation}
The new objective $g$ is smooth on $\mathcal{C}$ for $\theta\in(0,\frac12]$ and satisfies the $(\mu,\theta)$-\gls{heb}.
This generalization of the objective $f$ yields a very elegant extension, as the \gls{fw} trajectories for $f$ and $g$ on $B_p$ coincide (see \Cref{prop:naive-power-extension}).
The gradient of $g$ contains only a positive scalar scaling of $\nabla f$, which does not affect the \gls{lmo} output.
Furthermore, since $y \mapsto y^{1/(2\theta)}$ is monotone increasing, exact line search produces the same step sizes for $g$ as for $f$.
Importantly, the trajectories only coincide under exact line search: for the transformed objective $g$, short step and exact line search no longer agree when $\theta<\tfrac12$.

Due to the coinciding trajectories, one can directly reparameterize the asymptotics in \cref{thm:lower-bound} to obtain convergence rates for $g$ starting from the slow initialization in \eqref{eq:slow-start-init}.

\begin{restatable}{theorem}{extendedLowerBound}\label{thm:extended-lower-bound}\linktoproof{thm:extended-lower-bound}
    For every $p\ge 3$, $\mu>0$ and $\theta \in (0, \frac12]$, there exist a $p$-uniformly convex feasible set $\mathcal C \subset\mathbb R^d$ and a smooth function $g:\mathcal C \to\mathbb R$ satisfying the $(\mu, \theta)$-\gls{heb} such that, for a suitable initialization, exact-line-search \gls{fw} generates iterates $(x_t)_{t\geq 0}$ satisfying
    \begin{equation}\label{eq:extended-lower-bound}
        g(x_T)-g(x^*)
        \;\sim\;
    \mu^{-\frac{1}{\theta}}\left(\frac{p+1}{p}\right)^{\frac{1}{\theta}}
    \left(\frac{p}{4(p-1)}\right)^{\frac{p}{2\theta(p-1)}}
    T^{-\frac{p}{2\theta(p-1)}}
        \qquad\text{as }T\to\infty.
    \end{equation}
    In particular, for any $T \geq 0$, we have $g(x_T)-g(x^*) = \Omega \left(T^{-\frac{p}{2\theta(p-1)}} \right)$.
\end{restatable}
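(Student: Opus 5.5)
We take $\mathcal C=B_p$ and the objective $g$ from \eqref{eq: def g}, and reduce everything to \Cref{thm:lower-bound} through the trajectory coincidence in \Cref{prop:naive-power-extension}. The plan has four steps: verify the structural properties of $(\mathcal C, g)$; identify the \gls{fw} trajectories for $f$ and $g$; transfer the asymptotics by a continuous transformation; and read off the $\Omega$ statement.

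\textbf{Step 1: structural properties.} $B_p$ is $p$-uniformly convex, as already used in \Cref{sec:lower-bound-uc}. The minimizer of $g$ over $B_p$ is $x^*=e_1$ with $g(x^*)=0$. The \gls{heb} holds with equality:
\[
\|x-e_1\|=\mu\bigl(\mu^{-1/\theta}\|x-e_1\|^{1/\theta}\bigr)^\theta=\mu\, g(x)^\theta .
\]
Strict convexity follows because $g$ is an increasing convex power (exponent $1/\theta\ge 2$) of a norm, composed with an affine map. Smoothness on $\mathcal C$ holds because $1/\theta\ge2$, so the gradient $\mu^{-1/\theta}\theta^{-1}\|x-e_1\|^{1/\theta-2}(x-e_1)$ is locally Lipschitz on the compact set $B_p$. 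For $\theta=\tfrac12$ this is just a multiple of $f$.

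\textbf{Step 2: identical trajectories.} We run exact-line-search \gls{fw} on $g$ from the slow start $x_0^{\mathrm{slow}}(u_0)$ of \eqref{eq:slow-start-init}. Two facts make the iterates coincide with those for $f$. First, $\nabla g(x)=c(x)\nabla f(x)$ with $c(x)>0$ whenever $x\neq e_1$, so the \gls{lmo} \eqref{eq:lmo-lp} returns the same vertex. Second, $g=\mu^{-1/\theta}f^{1/(2\theta)}$ is a strictly increasing function of $f$, so the line-search argmin over $[0,1]$ is unchanged. By induction the iterates agree for all $t$. The one thing to check is that no iterate hits $e_1$ exactly, where the gradient vanishes. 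This is excluded because along the slow trajectory $r_t>0$, as follows from the contraction law \eqref{eq:contraction-law} and $s_t>0$.

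\textbf{Step 3: transferring the asymptotics.} Write $h_T=f(x_T)$ and let $K$ denote the constant in \eqref{eq:lower-bound}. Then
\[
g(x_T)-g(x^*)=\mu^{-1/\theta}h_T^{1/(2\theta)} .
\]
Since $h_T\sim K T^{-p/(p-1)}$ and $z\mapsto z^{1/(2\theta)}$ preserves the relation $\sim$ (ratios tend to $1$ under continuous powers), we get
\[
g(x_T)-g(x^*)\sim\mu^{-1/\theta}K^{1/(2\theta)}T^{-p/(2\theta(p-1))}.
\]
Evaluating $K^{1/(2\theta)}$ gives $\left(\frac{p+1}{p}\right)^{1/\theta}\left(\frac{p}{4(p-1)}\right)^{p/(2\theta(p-1))}$, which matches \eqref{eq:extended-lower-bound}.

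\textbf{Step 4: the $\Omega$ statement.} The asymptotic equivalence yields the lower bound for all large $T$. For finitely many small $T$ the gap is strictly positive, since $x_T\neq e_1$. Taking the minimum of the finitely many ratios extends the constant to all $T\ge0$; the same argument is presumably how the corresponding claim in \Cref{thm:lower-bound} is obtained.

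The main obstacle is Step 2, specifically the claim that the line search is unaffected by the monotone transform and that the trajectory never reaches the optimum. Everything else is bookkeeping of exponents. I would rely on \Cref{prop:naive-power-extension} for the trajectory coincidence and only add the nonvanishing-residual remark.
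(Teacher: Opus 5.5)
Your proposal is correct and follows the paper's proof essentially step for step: take $\mathcal C=B_p$, check the $(\mu,\theta)$-\gls{heb} with equality, invoke \Cref{prop:naive-power-extension} for the coinciding trajectories, and raise the asymptotic of \Cref{thm:lower-bound} to the power $1/(2\theta)$. One small tightening: the contraction law \eqref{eq:contraction-law} only gives $s_t>0$ for large $t$ and already presupposes $r_t>0$, so for $x_t\neq e_1$ at every $t$ (which your Step 4 uses) you should instead cite \Cref{prop:slow-start-tracking}, whose conclusion $y_t\in I\subset(0,p^{1/q})$ gives $w_t\neq 0$ for all $t$ (the trajectory coincidence itself does not need this, since \Cref{prop:naive-power-extension} already covers the stationary case).
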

This yields a lower bound for \gls{fw} under $(\mu,\theta)$-\gls{heb} for $p$-uniformly convex sets with $p \geq 3$, in arbitrary dimension $d\ge 2$, using the same slow initialization defined in \eqref{eq:slow-start-init}.
For $\theta=\tfrac12$, it recovers \cref{thm:lower-bound} and matches the upper bound of \citet{kerdreux2021projection}.
For $\theta\in(0,\tfrac12)$, the exponent $\frac{p}{2\theta(p-1)}$ does not match the known upper-bound exponent $\frac{p}{p-2\theta}$.
It remains open whether that upper bound is tight, or whether a faster worst-case guarantee is possible in this regime.

In \Cref{fig:primal-gap-heb} we compare the primal gap $g(x_t)-g(x^*)$ along exact-line-search \gls{fw} trajectories on $B_p\subset\R^2$ starting from the slow initialization in \eqref{eq:slow-start-init} with the upper bound rate by \citet{kerdreux2021projection} and our new derived lower bound in \cref{thm:extended-lower-bound}.
We compare these quantities for $p\in\{3,5\}$ and $\theta\in\{\tfrac13,\tfrac14\}$.
In each setting we observe that the numerics match our results in \cref{thm:extended-lower-bound}.
After a short transient the primal gap tracks the reference line $T^{-p/(2\theta(p-1))}$ representing the derived lower bounds in \eqref{eq:extended-lower-bound}.
The reference line for $T^{-p/(p-2\theta)}$ demonstrates the gap to the upper bound by \citet{kerdreux2021projection} which manifests as a shallower slope than the primal gap.

\begin{figure}[t]
    \centering
    \includegraphics[width=\linewidth]{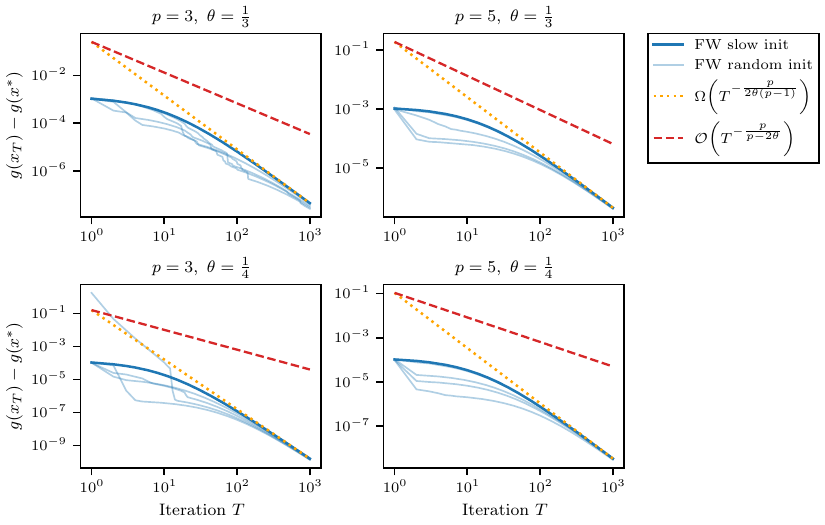}
    \caption{Comparison of the primal gap $g(x_T)-g(x^*)$ versus iteration $T$ for exact-line-search \gls{fw} for minimizing \eqref{eq: def g} on $B_p$ for $p\in\{3,5\}$ and $\theta\in\{\tfrac13,\tfrac14\}$.
    The solid curve uses the slow initialization \eqref{eq:slow-start-init}, while the faint curves are runs from generic initializations.
    The dotted reference line shows the $\Omega\left(T^{-p/(2\theta(p-1))}\right)$ lower bound from \cref{thm:extended-lower-bound}, while the dashed line shows the $\bigo{T^{-p/(p-2\theta)}}$ upper bound by \citet{kerdreux2021projection}.
    Both reference lines are scaled for clearer visualization.}
    \label{fig:primal-gap-heb}
\end{figure}

%% ====================================================================
\section{Conclusion}\label{sec:discussion}
%% ====================================================================

We established matching $\Omega \big(T^{-p/(p-1)}\big)$ lower bounds on the convergence of \gls{fw} with exact line search or short steps over $p$-uniformly convex feasible sets for every $p\ge 3$, pairing with the upper bounds of \citet{kerdreux2021projection}.
The arguments follow the dynamics of \gls{fw} on explicit instances on $\ell_p$ balls and apply in any fixed dimension $d \geq 2$, unlike typical information-theoretic constructions.
We further extended this framework to smooth convex objectives satisfying a Hölderian error bound, recovering the uniformly convex exponent when $\theta=\tfrac12$ while leaving open whether the known upper bounds are tight for $\theta\in(0,\tfrac12)$.

\paragraph{Open questions.}
There are several natural questions left open by our analysis.
First, our lower bound in the \gls{heb} case does not match the known upper bound by \citet{kerdreux2021projection}.
It remains open whether one can find a matching lower bound or if one can prove a faster convergence guarantee.
Second, our lower bounds are stated for \gls{fw} with exact line search or short steps.
\citet{wirth23acceleration} show that open-loop step-size rules can converge faster on uniformly convex sets in some regimes, and a matching lower bound for open-loop \gls{fw} would close this gap. 
%Next, there do not exist any lower bounds for the case of $p \in (2,3)$.
Finally, \gls{fw} variants such as away-step and pairwise \gls{fw} are known to accelerate the algorithm on polytopes.
Whether away-step and pairwise \gls{fw} can bypass our lower bound on uniformly convex sets is still unknown.

\paragraph{AI-assisted proof discovery.}
The framework of \citep{zimmer2026agentic} couples a general-purpose command-line coding agent with a sandboxed workspace, a project-specific instruction file, and a set of methodological rules (``commandments'') that enforce single-variable experimentation, staged verification, and structured reporting.
The human researcher steers the agent at the level of subtasks rather than individual edits, and intermediate artefacts (proof drafts, numerical checks, plots) are version-controlled so the trajectory of the project remains inspectable.

We gave the agent the prior strongly convex lower bounds and the matching $p$-uniformly convex upper bound (\Cref{tab:rates}) and asked it to find a tight lower bound in the uniformly convex regime.
It first attempted to generalize the high-dimensional construction of \citet{grimmer2026uniform} to $\ell_p$ balls.
This failed, as the construction relies on decomposing strongly convex sets as intersections of shifted Euclidean balls and the agent did not find a direct analogue for $p>2$.
The agent then pivoted to the dynamics-based approach of \citet{halbey2026lower}, derived the \gls{fw} iteration on $\ell_p$ balls in closed form, and verified each component numerically with Julia \texttt{BigFloat} scripts, totaling more than thirty individual checks.
Experiments across multiple values of $p$ revealed that the iterates alternate in sign and settle onto a low-dimensional invariant curve whose shape can be characterized analytically, which suggested the right proof strategy.
The agent first estimated the relevant constants numerically and only then derived them in closed form, before assembling a rigorous proof for $p\ge 3$.
The boundary case $p\in(2,3)$, where the sign alternation is intermittent, was identified empirically as qualitatively different and currently lies outside the reach of our technique.
Once the agent had produced a complete draft, we proofread the proof carefully, improved the presentation, and fixed minor issues.
Finally, the extension to \gls{heb} functions in \Cref{sec:sharp-extension} was derived by hand.

\begin{ack}
    Funded by the Deutsche Forschungsgemeinschaft (DFG, German Research Foundation) under Germany's Excellence Strategy – The Berlin Mathematics Research Center MATH+ (EXC-2046/1, EXC-2046/2, project ID: 390685689).
\end{ack}

\bibliographystyle{plainnat}
\bibliography{refs_arixv}

%%%%%%%%%%%%%%%%%%%%%%%%%%%%%%%%%%%%%%%%%%%%%%%%%%%%%%%%%%%%
\newpage
\appendix

\section{Proofs}\label{sec:proofs}

\begin{lemma}\label{lem:lmo-lp}
    Let $p\in(1,\infty)$, $q=p/(p-1)$, $g\in\R^d \setminus \{0\}$, and define
    $v\in\R^d$ by
    \[
        v_i = -\frac{\operatorname{sign}(g_i)\,|g_i|^{q-1}}{\norm{g}_q^{q-1}},
    \]
    for all $i=1,\dots,d$. Then
    \[
        v=\arg\min_{w\in B_p}\ip{g,w}.
    \]
\end{lemma}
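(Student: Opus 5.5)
The proof is a direct application of Hölder's inequality together with its equality case. We show three things in turn: that $v$ is feasible, that it attains the lower bound $-\norm{g}_q$ on $\ip{g,w}$ over $B_p$, and that it is the only point of $B_p$ doing so.

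\emph{Feasibility.} Since $p(q-1)=q$, we have $|v_i|^p = |g_i|^{q}/\norm{g}_q^{q}$. Summing over $i$ gives $\norm{v}_p^p = 1$, so $v\in B_p$, and in fact $v$ lies on the boundary.

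\emph{Optimal value.} For every $w\in B_p$, Hölder's inequality gives $\ip{g,w}\ge -\norm{g}_q\norm{w}_p\ge -\norm{g}_q$. On the other hand,
\[
\ip{g,v} = -\sum_i |g_i|^{q}/\norm{g}_q^{q-1} = -\norm{g}_q^q/\norm{g}_q^{q-1} = -\norm{g}_q .
\]
So $v$ attains the lower bound and is therefore a minimizer.

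\emph{Uniqueness.} This is the only step needing care, since the statement asserts that the minimizer is unique ($v=\arg\min$). Suppose $w\in B_p$ also satisfies $\ip{g,w}=-\norm{g}_q$. Then both inequalities in the chain above are equalities.
\begin{itemize}
\item Equality in $\norm{w}_p\le 1$ forces $\norm{w}_p=1$.
\item Equality in the triangle step $\sum_i g_iw_i \ge -\sum_i |g_i||w_i|$ forces $\operatorname{sign}(w_i)=-\operatorname{sign}(g_i)$ whenever $g_iw_i\neq 0$.
\item Equality in Hölder's inequality for the nonnegative vectors $(|g_i|)$ and $(|w_i|)$, with $1<p<\infty$, forces $|w_i|^p=\lambda|g_i|^q$ for some $\lambda\ge 0$. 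The normalization $\norm{w}_p=1$ then gives $\lambda=\norm{g}_q^{-q}$.
\end{itemize}
Coordinates with $g_i=0$ have $w_i=0$ by the proportionality. For the remaining coordinates, sign and magnitude together are pinned down, so $w=v$.

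An alternative route to uniqueness is strict convexity of $B_p$ for $1<p<\infty$: a linear functional cannot be minimized on a nontrivial segment of the boundary of a strictly convex set. I would state the equality-case version explicitly, since it also gives the closed form.
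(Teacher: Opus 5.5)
Your proof is correct and follows the paper's argument: Hölder's inequality gives the lower bound $-\norm{g}_q$, the identity $p(q-1)=q$ shows $v\in B_p$, and $v$ attains the bound. The only difference is the uniqueness step. The paper just invokes strict convexity of $B_p$, which is your stated alternative, while your equality-case analysis of Hölder's inequality is a more explicit, self-contained proof of the same fact.
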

\begin{proof}
    By Hölder we have,
    \[
      \ip{g,w}\ge -\norm{g}_q\norm{w}_p\ge -\norm{g}_q
    \]
    for all $w\in B_p$.
    Hence the minimum value is at least $-\norm{g}_q$. Using the definition of $v$
    and $(q-1)p=q$,
    \[
        \norm{v}_p^p
        =\sum_i \frac{|g_i|^{(q-1)p}}{\norm{g}_q^{(q-1)p}}
        =\frac{\sum_i |g_i|^q}{\norm{g}_q^q}
        =1,
    \]
    so $v\in B_p$. Moreover,
    \[
        \ip{g,v}
        =-\sum_i \frac{|g_i|^q}{\norm{g}_q^{q-1}}
        =-\norm{g}_q.
    \]
    Thus $v$ attains the lower bound and is optimal.
    Since the objective is linear and $B_p$ is strictly convex, the minimizer is unique.
\end{proof}

\confinementProposition*
\begin{proof}\linkofproof{prop:confinement}
    If $x\in\spann\{e_1,e_2\}$ then only coordinates 1 and 2 of $\nabla f(x)=2(x-e_1)$ are nonzero.
    \cref{lem:lmo-lp} shows that the LMO on $B_p$ acts componentwise in the dual power
    $|\cdot|^{q-1}$, so the minimizer $v$ has nonzero entries only in coordinates 1
    and 2. The update preserves the span.
\end{proof}

\begin{proposition}\label{prop:uw-dynamics}
    Let $q=p/(p-1)$ and $M\defi (u^q+|w|^q)^{1/q}$.
    One \gls{fw} step maps $(u,w)\mapsto(u',w')$ via
    \begin{align}
        v_1 &= \frac{u^{q-1}}{M^{q-1}}, &
        v_2 &= -\frac{|w|^{q-1}}{M^{q-1}}\,\sign(w), \label{eq:lmo-uw} \\
        d_1 &= v_1-1+u, &
        d_2 &= v_2-w, \label{eq:dir-uw} \\
        \gamma &= \frac{M-u+u^2+w^2}{d_1^2+d_2^2}, \label{eq:gamma-uw} \\
        u' &= u-\gamma d_1, &
        w' &= w+\gamma d_2. \label{eq:update-uw}
    \end{align}
\end{proposition}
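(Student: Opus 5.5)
The plan is to compute the gradient in the centered coordinates, apply the closed-form \gls{lmo} of \Cref{lem:lmo-lp}, and then solve the exact line search as a one-dimensional quadratic. The only non-mechanical step is showing that the line-search step lies in $[0,1]$, so that the clipping in \Cref{alg:fw} is inactive. By \Cref{prop:confinement} we may work in $d=2$. We assume $u\ge 0$, which holds automatically since $x\in B_p$ forces $x_1\le 1$, and $(u,w)\neq(0,0)$, since otherwise we are at the optimum.

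\textbf{Oracle and direction.} First, $\nabla f(x)=2(x-e_1)=2(-u,w)$. The formula in \Cref{lem:lmo-lp} is invariant under positive rescaling of $g$, so we may drop the factor $2$ and take $g=(-u,w)$, for which $\norm{g}_q=(u^q+|w|^q)^{1/q}=M$. Then $\sign(g_1)=-1$ (or $g_1=0$ if $u=0$, which is consistent), so $v_1=u^{q-1}/M^{q-1}$ and $v_2=-\sign(w)|w|^{q-1}/M^{q-1}$; this is \eqref{eq:lmo-uw}. The direction $d=v-x$ has components $d_1=v_1-(1-u)$ and $d_2=v_2-w$, which is \eqref{eq:dir-uw}. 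Finally, $x+\gamma d$ translates in the $(u,w)$-coordinates into $u'=u-\gamma d_1$ and $w'=w+\gamma d_2$, giving \eqref{eq:update-uw}.

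\textbf{Line search.} Along the segment, $\phi(\gamma)=\norm{x-e_1+\gamma d}_2^2$ is a convex quadratic. If $d\neq 0$, its unconstrained minimizer is
\[
\gamma=-\frac{\ip{x-e_1,d}}{\norm{d}_2^2}=\frac{u d_1-w d_2}{d_1^2+d_2^2}.
\]
Expanding the numerator gives $u v_1-w v_2-u+u^2+w^2$. Moreover, $u v_1-w v_2=(u^q+|w|^q)/M^{q-1}=M$, so the numerator equals $M-u+u^2+w^2$; this is \eqref{eq:gamma-uw}.

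\textbf{The step is not clipped.} It remains to show that this $\gamma$ lies in $[0,1]$, so that it is the minimizer over $[0,1]$ used in \Cref{alg:fw}.
\begin{itemize}
  \item Nonnegativity: the numerator equals the \gls{fw} gap $\tfrac12\ip{\nabla f(x),x-v}$, which is $\ge 0$ by optimality of $v$ and $x\in B_p$.
  \item Upper bound: $\gamma\le 1$ is equivalent to $\ip{d,d-(e_1-x)}\ge 0$, that is, $\ip{d,v-e_1}\ge 0$. Writing $d=(v-e_1)+(e_1-x)$ gives
  \[
  \ip{d,v-e_1}=\norm{v-e_1}_2^2-u(1-v_1)+|w|^q/M^{q-1}.
  \]
  Since $e_1\in B_p$, optimality of $v$ gives $\ip{g,v-e_1}\le 0$, i.e.\ $u(1-v_1)\le -w v_2=|w|^q/M^{q-1}$. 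Substituting yields $\ip{d,v-e_1}\ge\norm{v-e_1}_2^2\ge 0$.
\end{itemize}
The degenerate case $d=0$ means $x=v$. Then $x$ is a minimizer of the linear function $\ip{g,\cdot}$, whose minimum value $-M<0$ is strictly below $\ip{g,e_1}=-u$ whenever $w\neq 0$. So $d=0$ forces a zero gap, hence $x=e_1$, which is excluded.

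This bound $\gamma\le 1$ is the one step requiring an actual argument rather than algebra. Everything else is direct substitution.
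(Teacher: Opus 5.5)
Your proposal is correct, and its computational core is exactly the paper's proof: the gradient $2(-u,w)$, the closed-form oracle from \Cref{lem:lmo-lp}, the unconstrained minimizer $\gamma=(ud_1-wd_2)/(d_1^2+d_2^2)$, and the identity $uv_1-wv_2=M$.

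The one genuine difference is that you also show the step is never clipped. The paper's proof of this proposition takes the unconstrained minimizer without comment. It checks $\gamma^*\le 1$ only later, in \Cref{prop:slow-start-tracking}, and only locally: it uses $\gamma^*=O(u^\kappa)$ for sufficiently small $u$ and $y\in I$. Your argument tests the optimality of $v$ against the feasible point $e_1$, which gives $\gamma\in[0,1]$ at every non-optimal feasible iterate. This makes that later shrinking of $u_1$ unnecessary.

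A minor point on the degenerate case: the remark $-M<-u$ for $w\neq 0$ plays no role in the argument. The clean reason that $d\neq 0$ is the same inequality you use for $\gamma\le 1$, namely $\langle g,x-v\rangle\ge\langle g,x-e_1\rangle=\norm{x-e_1}_2^2>0$.
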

\begin{proof}\linkofproof{prop:uw-dynamics}
    Writing the objective in terms of $(u,w)$ gives $f(x) = \norm{x-e_1}^2 = u^2+w^2$.
    Using $\nabla f=(-2u,2w)^\top$ with $\norm{\nabla f}_q=2(u^q+|w|^q)^{1/q} = 2M$ and \cref{lem:lmo-lp} yields
    \begin{align*}
        v_1 &= -\frac{\sign(-u)\,|u|^{q-1}}{M^{q-1}} = \frac{u^{q-1}}{M^{q-1}}\\
        v_2 &= -\frac{\sign(w)\,|w|^{q-1}}{M^{q-1}},
    \end{align*}
    where we have used that $u > 0$ since $x_1 < 1$.
    The update direction is then given by $d_1 = v_1 - x_1 = v_1 -1+u$ and $d_2 = v_2 - x_2 = v_2 - w$.
    The exact line search minimizing
    \begin{align*}
        f(x+\gamma(v-x))
        &= f(x)+\gamma\ip{\nabla f, v-x}+\gamma^2\norm{v-x}^2 \\
        &= u^2+w^2+2\gamma(-ud_1+wd_2)+\gamma^2(d_1^2+d_2^2).
    \end{align*}
    yields
    \begin{equation}
        \label{eq:gamma-uw-expand}
        \gamma = \frac{ud_1-wd_2}{d_1^2+d_2^2} 
        = \frac{u(v_1-1+u) - w(v_2-w)}{d_1^2+d_2^2}
        = \frac{M-u+u^2+w^2}{d_1^2+d_2^2},
    \end{equation}
    where we have used that $uv_1-wv_2 = \frac{u^q+|w|^q}{M^{q-1}} = M$.
    The update \eqref{eq:update-uw} follows immediately,
    \begin{align*}
        u' &= 1 - x_1' = 1 -(x_1 + \gamma d_1) = u - \gamma d_1\\
        w' &= x_2' = x_2 + \gamma d_2 = w + \gamma d_2.
    \end{align*}
\end{proof}

\begin{corollary}\label{cor:ratio-identity}
Assume $w\neq 0$ and define
$P \defi  |w|(1-v_1)+u|v_2|>0$. Then
\[
    u'=\frac{|d_2|\,P}{d_1^2+d_2^2},
    \qquad
    |w'|=\frac{|d_1|\,P}{d_1^2+d_2^2},
    \qquad
    \rho'=\frac{|w'|}{u'}=\frac{|d_1|}{|d_2|}.
\]
In particular, if $d_1\neq 0$ then $w'\neq 0$.
\end{corollary}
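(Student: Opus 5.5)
The plan is to compute $u'$ and $w'$ directly from \Cref{prop:uw-dynamics} and check that both share a common factor. I would not use the simplified expression \eqref{eq:gamma-uw} for $\gamma$. Instead I take the intermediate form in \eqref{eq:gamma-uw-expand}, namely $\gamma=(ud_1-wd_2)/D$ with $D\defi d_1^2+d_2^2$, and substitute it into \eqref{eq:update-uw}. Clearing the denominator gives
\[
u'=\frac{u d_2^2+w d_1d_2}{D}=\frac{d_2\,(u d_2+w d_1)}{D},
\qquad
w'=\frac{w d_1^2+u d_1 d_2}{D}=\frac{d_1\,(u d_2+w d_1)}{D},
\]
where the $w d_2^2$ terms cancel in $w'$. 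Both updates therefore carry the common factor $Q\defi u d_2+w d_1$.

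The key step is to identify $Q$ up to sign. Let $s\defi\sign(w)\in\{\pm1\}$, which is well defined because $w\neq 0$. By \eqref{eq:lmo-uw} we have $v_2=-s|v_2|$. Hence
\[
d_2=v_2-w=-s\,(|v_2|+|w|),
\]
and in particular $|d_2|=|v_2|+|w|$. Substituting this together with $w=s|w|$ and $d_1=v_1-1+u$ into $Q$, the $u|w|$ terms cancel and we get
\[
Q=-s\bigl(u|v_2|+|w|(1-v_1)\bigr)=-sP.
\]

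Next I verify that $P>0$. We have $u>0$, as used in the proof of \Cref{prop:uw-dynamics}. Also $|v_2|=|w|^{q-1}/M^{q-1}>0$ because $w\neq0$. Finally $1-v_1\ge 0$, since $v_1=(u/M)^{q-1}\le 1$ from $u\le M$. So $P\ge u|v_2|>0$.

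With this identity the formulas follow directly:
\begin{itemize}
\item $u'=d_2Q/D=-s\,d_2P/D=(|v_2|+|w|)P/D=|d_2|P/D$, which is in particular positive.
\item $w'=-s\,d_1P/D$, so $|w'|=|d_1|P/D$.
\item Dividing gives $\rho'=|w'|/u'=|d_1|/|d_2|$, which is legitimate because $|d_2|>0$.
\item If $d_1\neq0$, then $|w'|=|d_1|P/D>0$, so $w'\neq 0$.
\end{itemize}

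There is no real obstacle here. The only point requiring care is the sign bookkeeping in $Q$: expressing $v_2$ and $d_2$ through $s=\sign(w)$ is exactly what makes the cancellation produce the positive quantity $P$.
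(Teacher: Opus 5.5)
Your proposal is correct and follows essentially the same route as the paper: substitute $\gamma=(ud_1-wd_2)/(d_1^2+d_2^2)$ into the update, factor out the common term $wd_1+ud_2$, and identify its absolute value with $P$ using $\sign(v_2)=-\sign(w)$. The one difference is that you track $s=\sign(w)$ explicitly, so $u'=|d_2|P/(d_1^2+d_2^2)>0$ comes out as a consequence, whereas the paper takes absolute values and invokes $u'=1-x_1'>0$ separately.
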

\begin{proof}\linkofproof{cor:ratio-identity}
    A direct expansion of \eqref{eq:update-uw} using the first identity in \eqref{eq:gamma-uw-expand} gives
    \begin{align}
        u' &= u-\gamma d_1
        =\frac{u(d_1^2 + d_2^2) - ud_1^2+ w d_1 d_2}{d_1^2+d_2^2}
        = d_2 \frac{w d_1 + u d_2}{d_1^2+d_2^2}\label{eq:u-update-expand}\\
        w' &= w+\gamma d_2
        = \frac{w(d_1^2 + d_2^2) + u d_1 d_2 - w d_2^2}{d_1^2+d_2^2}
        = d_1 \frac{w d_1 + u d_2}{d_1^2+d_2^2}\label{eq:w-update-expand}.
    \end{align}
    Furthermore, we have
    \[
        w d_1 + u d_2 = w(v_1-1+u) + u (v_2-w) = w(v_1-1) + u v_2.
    \]
    Since $v_1 < 1$, $u>0$ and $\sign(v_2) = -\sign(w)$, we have $\sign(w(v_1-1)) = - \sign(uv_2)$ and thus
    \[
        |w d_1 + u d_2| = |w(v_1-1) + u v_2| = |w|(1-v_1)+u|v_2|=P.
    \]
    Taking the absolute values of \eqref{eq:u-update-expand} and \eqref{eq:w-update-expand} and using $u' = 1- x_1' > 0$ yields the stated results.
    If $d_1\neq 0$ then $|w'|>0$ since $P>0$.
    Finally, we have $\rho'=\frac{|w'|}{u'} = \frac{|d_1|}{|d_2|}$.
\end{proof}

\begin{lemma}\label{lem:y-map}\linktoproof{lem:y-map}
    Fix $p\ge 3$ and let $q=p/(p-1)$, $\alpha=(p-1)/p$, and $\kappa=2\alpha$.
    Let $I\subset(0,p^{1/q})$ be compact.
    Then uniformly for $y\in I$, as $u\downarrow 0$,
    \begin{equation}\label{eq:y-map}
        \Phi(u,y)
        = F(y)+uG(y)+O(u^\kappa),
        \qquad
        F(y)=y^{-(q-1)}-\frac{y}{p},
        \qquad
        G(y)=\frac{y}{p}+\frac{p-1}{2p^2}\,y^{q+1}.
    \end{equation}
    Moreover, uniformly on $I$,
    \[
        \frac{\partial}{\partial y}\Phi(u,y)=F'(y)+uG'(y)+O(u^\kappa),
        \qquad
        \frac{\partial}{\partial u}\Phi(u,y)=G(y)+O(u^{\kappa-1}).
    \]
\end{lemma}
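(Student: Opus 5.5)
The plan is to avoid expanding $u'$ and $w'$ separately and instead use the ratio identities of \Cref{cor:ratio-identity}. Writing $D\defi d_1^2+d_2^2$, those identities give
\[
    \Phi(u,y)=\frac{|w'|}{(u')^{1+\alpha}}
    =\frac{|d_1|\,P/D}{\bigl(|d_2|\,P/D\bigr)^{1+\alpha}}
    =\frac{|d_1|\,D^{\alpha}}{|d_2|^{1+\alpha}\,P^{\alpha}} .
\]
This reduces the lemma to expanding four explicit quantities $d_1,d_2,D,P$ in the regime $|w|=y\,u^{1+\alpha}$ with $y\in I$. The key exponent facts are $q\alpha=1$, $\alpha(q-1)=1/p$ and $1-1/p=\alpha$.

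\emph{Step 1: expand the oracle quantities.} From $|w|^q=y^qu^{q+1}$ we get $M=u(1+y^qu)^{1/q}$. Hence
\[
    v_1=(1+y^qu)^{-(q-1)/q}=1-\tfrac{y^q}{p}\,u+c_2(y)u^2+O(u^3),
    \qquad
    |v_2|=(|w|/M)^{q-1}=y^{q-1}u^{1/p}(1+y^qu)^{-1/p},
\]
and $d_1=u\bigl(1-\tfrac{y^q}{p}\bigr)+c_2(y)u^2+O(u^3)$. The hypothesis $I\subset(0,p^{1/q})$ is exactly what makes $1-y^q/p$ bounded away from $0$ on $I$, so $d_1>0$ and its absolute value is harmless.

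Since $\operatorname{sign}(v_2)=-\operatorname{sign}(w)$, we have $|d_2|=|v_2|+|w|=y^{q-1}u^{1/p}\bigl[(1+y^qu)^{-1/p}+y^{2-q}u^{\kappa}\bigr]$. This uses $1+\alpha-1/p=2\alpha=\kappa$.

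For $D$, the term $d_1^2/d_2^2$ is of relative size $u^{2-2/p}=u^{\kappa}$, so $D=d_2^2(1+O(u^\kappa))$.

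For $P=|w|(1-v_1)+u|v_2|$, the first term is of relative size $u^{1+\alpha+1}/u^{1+1/p}=u^{1+\kappa}$. Hence $P=u^{1+1/p}y^{q-1}(1+y^qu)^{-1/p}(1+O(u^{1+\kappa}))$.

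\emph{Step 2: collect orders.} Substituting into the formula for $\Phi$, the powers of $u$ cancel because $1+\tfrac{2\alpha}{p}-\tfrac{1+\alpha}{p}-\alpha-\tfrac{\alpha}{p}=1-\tfrac1p-\alpha=0$. The leading coefficient is
\[
    \bigl(1-y^q/p\bigr)\,y^{-(q-1)}=y^{-(q-1)}-y/p=F(y).
\]
All perturbations are relative factors of one of three kinds:
\begin{itemize}
    \item smooth functions of $y^qu$, contributing at orders $u,u^2,\dots$;
    \item the factor $(1+c_2u/(1-y^q/p)+\dots)$ from $d_1$;
    \item corrections of order $u^\kappa$ or $u^{1+\kappa}$.
\end{itemize}
Because $p\ge3$ gives $1<\kappa\le 2$, every $u^2$ term is $O(u^\kappa)$. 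The order-$u$ part therefore comes only from the first two kinds, namely from the second-order coefficient $c_2$ of $v_1$ and from the factors $(1+y^qu)^{\pm\cdot}$ in $|d_2|$, $D$ and $P$. I would compute these carefully and check that they combine into $uG(y)$ with $G(y)=\tfrac yp+\tfrac{p-1}{2p^2}y^{q+1}$. This bookkeeping, in particular getting $c_2=\tfrac{q-1}{2q}\bigl(\tfrac{q-1}{q}+1\bigr)y^{2q}$ and tracking the cancellations, is where I expect the main (computational) obstacle to lie. I would verify the resulting constant numerically before writing it up.

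\emph{Step 3: derivatives.} The computation above actually shows $\Phi(u,y)=H(u,u^\kappa,y)$, where $H(u,t,y)$ is real-analytic in a neighbourhood of $\{0\}\times\{0\}\times I$. Indeed, each factor is a finite product of powers of expressions like $1+y^qu$, $1+y^{2-q}t$ and $1+O(t)$ that are bounded away from zero on $I$. Moreover, $H(u,0,y)=F(y)+uG(y)+O(u^2)$ and $H(u,t,y)-H(u,0,y)=O(t)$, with all bounds holding together with their derivatives, uniformly on the compact $I$.

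The two derivative statements then follow. For the $y$-derivative, differentiating in $y$ commutes with the expansion and gives $F'+uG'+O(u^\kappa)$. For the $u$-derivative, the chain rule gives $\partial_u\Phi=\partial_uH+\kappa u^{\kappa-1}\partial_tH=G(y)+O(u)+O(u^{\kappa-1})=G(y)+O(u^{\kappa-1})$, using $\kappa-1\le 1$.
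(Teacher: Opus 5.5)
Your proposal is correct. It reaches the same leading-order object as the paper, but by a somewhat different route.

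\textbf{Decomposition.} The paper also starts from \Cref{cor:ratio-identity}, but it writes $\Phi=|d_1|/(|d_2|(u')^\alpha)$ with $u'=u(1-\eta)$ and $\eta=\gamma d_1/u$. It therefore has to expand the line-search step $\gamma=(M-u+h)/(d_1^2+d_2^2)$. You instead substitute $u'=|d_2|P/D$, so $\gamma$ never appears and only $d_1,d_2,D,P$ need expanding. After the powers of $u$ cancel, your expression collapses to exactly the paper's factor $B=y^{-(q-1)}(1+y^qu)^{1/p}\,d_1/u$, times $1+O(u^\kappa)$.

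\textbf{The deferred bookkeeping for $G$ is short.} With $z=y^q$, the product $(1+\tfrac zp u)(1-\tfrac zp+\tfrac{p+1}{2p^2}z^2u)$ has $u$-coefficient $\tfrac zp+\tfrac{p-1}{2p^2}z^2$. Multiplying by $y^{-(q-1)}$ gives $G$.

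\textbf{Derivatives.} This is the more substantive difference. The paper bounds $\partial_y$ and $\partial_u$ of each factor separately ($E$, $M-u+h$, $d_1^2+d_2^2$, $\gamma$, $\eta$). You observe that $\Phi(u,y)=H(u,u^\kappa,y)$ with $H$ smooth near $\{(0,0)\}\times I$; with the natural splitting, $H(u,0,y)$ is exactly the paper's $B(u,y)$. This makes both derivative expansions immediate from the chain rule and Taylor bounds for a single smooth function. It also makes transparent why the remainders keep their order after differentiation.

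\textbf{One small slip.} In $P$, the ratio $|w|(1-v_1)/(u|v_2|)$ is of order $u^{(2+\alpha)-(1+1/p)}=u^\kappa$, not $u^{1+\kappa}$; in fact it equals $\tfrac{y^2}{p}u^\kappa(1+O(u))$. This is harmless: it is still an $O(t)$ correction inside $H$, and the target remainder is $O(u^\kappa)$ anyway.
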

\begin{proof}\linkofproof{lem:y-map}
    The proof is divided into three parts.
    First, we are going to rewrite $\Phi(u,y)$ using some auxiliary functions that depend solely on $u$ and $y$ (or $z = y^q$).
    In the second part, we will expand each factor of the derived identity of $\Phi(u,y)$ around $u=0$.
    Finally, we show how these expansions extend to the partial derivatives of $\Phi(u,y)$.

    We begin by rewriting $\Phi(u,y)$ using \cref{cor:ratio-identity}
    \begin{equation}\label{eq:yprime-ratio}
        \Phi(u,y)= y'
        = \frac{\rho'}{(u')^\alpha}
        = \frac{|d_1|}{|d_2|(u')^\alpha}.
    \end{equation}
    In the following steps we will rewrite $d_1$, $d_2$ and $u'$ in terms of $u$ and $y$.
    First we define the auxiliary variable $A(u,y)\defi (1+y^q u)^{\frac1p}$.
    For any compact $I \subset (0,p^{1/q})$ and any $y \in I$, we have $z = y^q < p$.
    Using $\log(1+x) < x$ for $x > 0$, we have
    \[
        \log A(u,y) < \log (1+p u)^{\frac1p} = \frac1p \log (1+p u) < \frac1p (p u) = u,
    \]
    and therefore $\frac{1}{A(u,y)} > e^{-u}$.
    Next note that
    \begin{align}
        \label{eq:A-expand}
        A(u,y)&\defi (1+y^q u)^{\frac1p}
        = \left( 1 + \frac{|w|^q}{u^q}\right)^{\frac1p}
        =\frac{\left(u^q + |w|^q\right)^{\frac1p}}{u^{\frac qp}} \notag\\
        &= \left(\frac{M}{u}\right)^{\frac qp} = \left(\frac{M}{u}\right)^{q-1} = \frac{1}{v_1}.
    \end{align}
    Thus, by \eqref{eq:dir-uw} we have
    \begin{equation}
        \label{eq:d1-expand}
        d_1 = u + v_1 - 1 = u + \frac{1}{A} - 1 > u + e^{-u} - 1 > 0.
    \end{equation}
    For the second coordinate, we use $A = \left( \frac{M}{u}\right)^{q-1}$ from \eqref{eq:A-expand} to get
    \[
        |v_2| = \left(\frac{|w|}{M}\right)^{q-1} = \left(\frac{\rho u}{M}\right)^{q-1} = \frac{\rho^{q-1}}{A}
    \]
    and thus
    \begin{equation}
        \label{eq:d2-abs-expand}
        |d_2| = |v_2 - w|
        = |v_2|+|w|
        = \frac{\rho^{q-1}}{A} + y u^{1+\alpha}
        = \frac{(y u^\alpha)^{q-1}}{A} + y u^{1+\alpha}
        = \frac{y^{q-1}u^{\frac 1p}}{A} + y u^{1+\alpha}
    \end{equation}
    where we have used $\sign(v_2) = -\sign(w)$ and $\alpha(q-1)=\frac{q-1}{q}=\frac{1}{p}$.
    Next we define the auxiliary variables $E(u,y)$ and $\eta(u,y)$
    \[
        E(u,y)\defi y^{2-q}u^\kappa A(u,y),
        \qquad
        \eta(u,y)\defi \frac{\gamma d_1}{u}.
    \]
    Then we have
    \[
        |d_2|\,u^\alpha = \frac{y^{q-1}u}{A}\,(1+E),
        \qquad
        u'=u(1-\eta),
    \]
    so map $\Phi(u,y)$ admits the exact factorization
    \begin{equation}\label{eq:phi-factorization}
        \Phi(u,y) = \frac{d_1}{|d_2|(u')^\alpha}
        = \frac{d_1}{|d_2|u^\alpha(1-\eta)^\alpha}
        = \frac{A}{y^{q-1}}\frac{d_1}{u}(1+E)^{-1} (1-\eta)^{-\alpha},
    \end{equation}
    where we have dropped the absolute value for $d_1$ since $d_1 > 0$.

    Define
    \[
        B(u,y)\defi \frac{A}{y^{q-1}}\frac{d_1}{u},
        \qquad
        R(u,y)\defi (1+E)^{-1}(1-\eta)^{-\alpha},
    \]
    so that $\Phi=B\cdot R$ by \eqref{eq:phi-factorization}.
    We now expand the factors of $B$ and $R$ around $u=0$. By compactness of $I$,
    all estimates below are uniform for $y\in I$.
    After shrinking the $u \downarrow 0$ threshold, we may assume $zu=y^q u \leq \frac12$ for all $y \in I$.

    First we consider $A(u,y) = (1+zu)^{1/p}$.
    Since $A(0,y) = 1$ for any $y \in I$ and
    \begin{equation}
        \left.\frac{\partial }{\partial u} A(u,y)\right|_{u=0}
        = \left.\frac{1}{p} (1+zu)^{\frac 1p -1}z\right|_{u=0}
        = \frac{z}{p},
        \label{eq:A-deriv-1}
    \end{equation}
    we have
    \begin{equation}
        \label{eq:A-u-expand}
        A(u,y) = 1 + \frac{z}{p} u + \mathcal{O}(u^2).
    \end{equation}

    Next we compute the derivatives of $\frac{1}{A(u,y)}$ for $u=0$
    \begin{align}
        \left.\frac{\partial }{\partial u} \frac{1}{A(u,y)}\right|_{u=0}
        &= \left. -\frac{\frac{\partial }{\partial u} A(u,y)}{A(u,y)^2} \right|_{u=0}
        = -\frac{z}{p},
        \label{eq:invA-deriv-1}
        \\
        \left.\frac{\partial^2 }{\partial u^2} \frac{1}{A(u,y)}\right|_{u=0}
        &= \frac{2 \left(\frac{\partial }{\partial u} A(u,y) \right)^2 - A(u,y) \cdot \frac{\partial^2 }{\partial u^2} A(u,y)}{A(u,y)^3} \bigg|_{u=0} \notag \\
        &= 2 \left(\frac{z}{p}\right)^2 - 1 \cdot \frac{1-p}{p^2}z^2
        = \frac{p+1}{p^2}z^2,
        \label{eq:invA-deriv-2}
    \end{align}
    where we have used $A(0,y) = 1$ and \eqref{eq:A-deriv-1} as well as
    \begin{equation*}
        \left.\frac{\partial^2 }{\partial u^2} A(u,y)\right|_{u=0}
        = \left.\frac{1}{p}\left(\frac{1}{p}-1\right)(1+zu)^{\frac{1}{p}-2} z^2\right|_{u=0}
        = \frac{1-p}{p^2}z^2.
        \label{eq:A-deriv-2}
    \end{equation*}
    Thus, the expansion of $\frac{1}{A(u,y)}$ for $u=0$ yields
    \begin{align}
        \label{eq:d1-over-u-expand}
        \frac{d_1}{u}
        &= \frac{u + \frac{1}{A} - 1}{u}
        = \frac{u  + \left( 1 - \frac{z}{p} u + \frac12 \frac{p+1}{p^2}z^2 u^2 + O(u^3) \right) - 1}{u} \notag \\
        &= 1-\frac{z}{p} + \frac{p+1}{2p^2}z^2 u + O(u^2).
    \end{align}
    Combining \eqref{eq:A-u-expand} and \eqref{eq:d1-over-u-expand} yields
    \begin{align}
        B(u,y)
        &= \frac{A}{y^{q-1}}\frac{d_1}{u} \notag\\
        &= y^{-(q-1)} \left( 1 + \frac{z}{p} u + \mathcal{O}(u^2) \right) \left( 1 - \frac{z}{p} + \frac{p+1}{2p^2}z^2 u + O(u^2) \right) \notag\\
        &= F(y) + uG(y) + O(u^2). \label{eq:B-u-expand}
    \end{align}
    Next we consider the terms of $R(u,y)$ starting with $(1+E)^{-1}$.
    Using the expansion $(1+x)^{-1} = 1 - x + O(x^2)$ for $x \to 0$ and
    \[
        E(u,y) = y^{2-q}u^\kappa A(u,y) = y^{2-q}u^\kappa \left( 1 + \frac{z}{p} u + \mathcal{O}(u^2) \right) = \mathcal{O}(u^\kappa),
    \]
    we get
    \begin{equation}
      \label{eq:1+E-inv-expand}
        (1+E)^{-1} = 1 + O(u^\kappa).
    \end{equation}
    In the next step we expand $(1-\eta)^{-\alpha}$.
    For that we rewrite $\eta$ using \eqref{eq:gamma-uw},
    \[
        \eta = \frac{\gamma d_1}{u} = \frac{d_1}{u} \cdot \frac{M - u + h}{d_1^2+d_2^2}.
    \]
    First we see that
    \[
        u(1+zu)^\alpha = u \left( 1 + \frac{|w|^q}{u^q}\right)^{\frac 1q} = (u^q + |w|^q)^{\frac 1q} = M,
    \]
    and
    \[
        h = u^2 + w^2 = u^2 + y^2 u^{2(1+\alpha)} = u^2 + y^2 u^{2+\kappa}.
    \]
    Expansion yields $(1+zu)^\alpha = 1 + \alpha z u + O(u^2)$ and thus we get
    \begin{align}
        \label{eq:M-u+h-expand}
        M-u+h
        &= u(1+zu)^\alpha - u + u^2 + y^2 u^{2+\kappa} \notag\\
        &= u(1+\alpha z u + \mathcal{O}(u^2)) - u + u^2 + y^2 u^{2+\kappa} = \mathcal{O}(u^2).
    \end{align}
    Furthermore, by \eqref{eq:d1-over-u-expand} we have $d_1 = \mathcal{O}(u)$
    and \eqref{eq:d2-abs-expand} and \eqref{eq:A-u-expand} imply $|d_2| = \Theta(u^{1/p})$.
    Therefore, together with \eqref{eq:M-u+h-expand},
    \begin{equation}
        \label{eq:gamma-expand}
        \gamma = \frac{M - u + h}{d_1^2+d_2^2} = \frac{\mathcal{O}(u^2)}{\Theta(u^{2/p})} = \mathcal{O}(u^{2- 2/p}) = \mathcal{O}(u^\kappa).
    \end{equation}
    Together with \eqref{eq:d1-over-u-expand} this yields
    \[
        \eta = \frac{d_1}{u} \cdot \gamma = \mathcal{O}(1) \cdot \mathcal{O}(u^\kappa) = \mathcal{O}(u^\kappa).
    \]
    With $(1-x)^{-\alpha} = 1 + \alpha x + O(x^2)$ for $x \to 0$ we get
    \begin{equation}
      \label{eq:1-eta-alpha-expand}
        (1-\eta)^{-\alpha} = 1 + \alpha \eta + O(\eta^2) = 1 + \mathcal{O}(u^\kappa).
    \end{equation}
    Therefore we have
    \begin{equation}
        \label{eq:R-u-expand}
        R(u,y)= (1+E)^{-1}(1-\eta)^{-\alpha} = (1+O(u^\kappa))(1+O(u^\kappa)) = 1+O(u^\kappa).
    \end{equation}
    Combining with \eqref{eq:B-u-expand} yields
    \begin{align}
        \label{eq:phi-u-expand}
        \Phi(u,y) &= B(u,y)R(u,y) = (F(y) + uG(y) + O(u^2))(1+O(u^\kappa)) \notag\\
        &= F(y) + uG(y) + O(u^2) + O(u^\kappa) = F(y) + uG(y) + O(u^\kappa),
    \end{align}
    as $\kappa = 2\cdot \frac{p-1}{p} < 2$.

    In the last step, we derive expansions for the partial derivatives of $\Phi(u,y)$ starting with $B$.
    First note that $A(u,y)=(1+y^q u)^{1/p}$ is smooth and $A(u,y) > 0$ for all $(u,y) \in [0,u_0]\times I$
    and thus $d_1 = u + \frac 1A - 1$ is smooth.
    Furthermore, since $d_1(0,y) = 0$ for all $y \in I$, the quotient $\frac{d_1}{u}$ is also smooth and finally $B(u,y)=y^{-(q-1)}A(u,y)\frac{d_1}{u}$ is smooth.
    Since $[0,u_0]\times I$ is compact, all second partial derivatives of $B(u,y)=F(y)+uG(y)+O(u^2)$ are bounded there
     and thus the $O(u^2)$ remainder stays bounded after differentiation.
    This yields
    \[
        \frac{\partial}{\partial y} B(u,y)=F'(y)+uG'(y)+O(u^2),\qquad
        \frac{\partial}{\partial u} B(u,y)=G(y)+O(u).
    \]
    uniformly for $y \in I$.

    Next we estimate derivatives of $R=(1+E)^{-1}(1-\eta)^{-\alpha}$.
    Since
    \[
        E(u,y)=y^{2-q}u^\kappa A(u,y),
    \]
    with $A=O(1)$, $\frac{\partial}{\partial y} A=O(u)$ and $\frac{\partial}{\partial u} A=O(1)$ uniformly on $I$, product
    rule gives
    \[
      \frac{\partial}{\partial y} E(u,y)=O(u^\kappa),\qquad
      \frac{\partial}{\partial u} E(u,y)=O(u^{\kappa-1}).
    \]
    Since $A >0$, we have $E(u,y) \geq 0$ and thus $(1+E)^{-2} = \mathcal{O}(1)$.
    The chain rule yields then
    \[
        \frac{\partial}{\partial y} (1+E)^{-1}=-(1+E)^{-2}\frac{\partial}{\partial y} E=O(u^\kappa),\qquad
        \frac{\partial}{\partial u} (1+E)^{-1}=-(1+E)^{-2}\frac{\partial}{\partial u} E=O(u^{\kappa-1}).
    \]
    Next we write $\eta=\frac{d_1}{u}\gamma$ with
    \[
        \gamma=\frac{M-u+h}{d_1^2+d_2^2},
    \]
    Since $M$ and $h$ are both twice continuously differentiable with $M - u + h  = \mathcal{O}(u^2)$, we get
    \[
        \frac{\partial}{\partial y} (M-u+h)=O(u^2),\qquad
        \frac{\partial}{\partial u} (M-u+h)=O(u).
    \]
    Further, $d_1=O(u)$ and \eqref{eq:d2-abs-expand} imply
    \[
        d_1^2+d_2^2=\Theta(u^{2/p}),\quad
        \frac{\partial}{\partial y}(d_1^2+d_2^2)=O(u^{2/p}),\quad
        \frac{\partial}{\partial u}(d_1^2+d_2^2)=O(u^{2/p-1}).
    \]
    Applying the quotient rule to $\gamma$ gives
    \[
        \frac{\partial}{\partial y} \gamma=O(u^\kappa),\qquad
        \frac{\partial}{\partial u} \gamma=O(u^{\kappa-1}),
    \]
    since $\kappa=2-2/p$.
    Furthermore, \eqref{eq:d1-over-u-expand} and the smoothness of $\frac{d_1}{u}$ yield
    \[
        \frac{\partial}{\partial y} \left(\frac{d_1}{u}\right)=O(1),\qquad
        \frac{\partial}{\partial u} \left(\frac{d_1}{u}\right)=O(1).
    \]
    Therefore, by product rule for $\eta=\frac{d_1}{u}\gamma$ we get
    \[
        \frac{\partial}{\partial y} \eta=O(u^\kappa),\qquad
        \frac{\partial}{\partial u} \eta=O(u^{\kappa-1}).
    \]
    For $u_0$ small enough we have $|\eta|\le \frac12$, hence
    $(1-\eta)^{-\alpha-1}=O(1)$ uniformly, and chain rule yields
    \[
        \frac{\partial}{\partial y} (1-\eta)^{-\alpha}=O(u^\kappa),\qquad
        \frac{\partial}{\partial u} (1-\eta)^{-\alpha}=O(u^{\kappa-1}).
    \]
    Combining with the bounds for $(1+E)^{-1}$ and its derivatives,
    \[
        \frac{\partial}{\partial y} R(u,y)=O(u^\kappa),\qquad
        \frac{\partial}{\partial u} R(u,y)=O(u^{\kappa-1}).
    \]
    Finally, from $\Phi=B\cdot R$, we get 
    \[
        \frac{\partial}{\partial y} \Phi(u,y)=(\frac{\partial}{\partial y} B)R+B\,\frac{\partial}{\partial y} R,\qquad
        \frac{\partial}{\partial u} \Phi(u,y)=(\frac{\partial}{\partial u} B)R+B\,\frac{\partial}{\partial u} R.
    \]
    Since $B=F+uG+O(u^2)=O(1)$ and all bounds are uniform on $I$, we get
    \[
        \frac{\partial}{\partial y} \Phi(u,y)=F'(y)+uG'(y)+O(u^\kappa),\qquad
        \frac{\partial}{\partial u} \Phi(u,y)=G(y)+O(u^{\kappa-1}).
    \]
    This completes the proof.
\end{proof}

\begin{proposition}\label{prop:z-drift}\linktoproof{prop:z-drift}
    Fix $p\ge 3$ and let $\Phi$ be as above.
    There exist $u_0>0$ and a function $y^*:[0,u_0]\to(0,\infty)$,
    continuous on $[0,u_0]$, $C^1$ on $(0,u_0]$, and Lipschitz on $[0,u_0]$, with
    $y^*(0)=C_p$ such that for all $u\in(0,u_0]$,
    \[
        \Phi(u,y^*(u)) = y^*(u).
    \]
    Moreover, as $u\downarrow 0$,
    \[
        y^*(u)= C_p + D_p u + O(u^\kappa),
        \qquad
        C_p = \left(\frac{p}{p+1}\right)^{1/q},
        \qquad
        D_p = C_p\cdot \frac{(p-1)(3p+1)}{2p(p+1)^2},
    \]
    and with $z^*(u) \defi y^*(u)^q$,
    \begin{equation}\label{eq:z-drift}
        z^*(u)=\frac{p}{p+1}+\frac{p(3p+1)}{2(p+1)^3}\,u+o(u).
    \end{equation}
\end{proposition}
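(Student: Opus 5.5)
The plan is a quantitative implicit function argument for $H(u,y)\defi \Phi(u,y)-y$ near $(0,C_p)$, based on the expansions in \Cref{lem:y-map}, followed by a Taylor expansion to get $D_p$. The leading-order fixed point comes from $F$. The equation $F(y)=y$ reads $y^{-(q-1)}=y\,\tfrac{p+1}{p}$, i.e.\ $y^q=\tfrac{p}{p+1}$, giving $C_p=(p/(p+1))^{1/q}$. Since $C_p^q<1<p$, the value $C_p$ lies in the interior of $(0,p^{1/q})$, so I fix a compact interval $I=[C_p-\delta_0,C_p+\delta_0]$ on which \Cref{lem:y-map} applies. Using $C_p^{-q}=\tfrac{p+1}{p}$ and $q-1=\tfrac1{p-1}$, I get
\[
  F'(C_p)=-(q-1)C_p^{-q}-\tfrac1p=-\tfrac{p+1}{p(p-1)}-\tfrac1p,
  \qquad 1-F'(C_p)=\tfrac{p+1}{p-1}\neq 0.
\]
This non-degeneracy is what the argument rests on.

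\textbf{Existence and regularity.} $\Phi$ is only known to be smooth for $u>0$, and $\partial_u\Phi=G+O(u^{\kappa-1})$ with $\kappa-1=(p-2)/p>0$. So I would avoid the classical implicit function theorem at $u=0$ and use a parametrized contraction instead. Set $\beta\defi 1-F'(C_p)$ and
\[
  T_u(y)\defi y+\beta^{-1}\bigl(\Phi(u,y)-y\bigr).
\]
Then $\partial_yT_u=1+\beta^{-1}(\partial_y\Phi-1)=\beta^{-1}(\partial_y\Phi-F'(C_p))$. By \Cref{lem:y-map}, $\partial_y\Phi=F'(y)+O(u)$ uniformly on $I$, so after shrinking $\delta$ and $u_0$ we get $|\partial_yT_u|\le\tfrac12$ on $[C_p-\delta,C_p+\delta]$. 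Moreover,
\[
  |T_u(C_p)-C_p|=\beta^{-1}|\Phi(u,C_p)-C_p|=O(u),
\]
so $T_u$ maps this interval into itself for $u\le u_0$. Banach's fixed point theorem then gives a unique fixed point $y^*(u)$, with $y^*(0)=C_p$ because $\Phi(0,\cdot)=F$.

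Lipschitz continuity follows from the same contraction. Since $|\partial_u\Phi|$ is bounded on $(0,u_0]\times I$ (its limit $G$ is bounded and $u^{\kappa-1}\to0$), I get
\[
  |y^*(u)-y^*(u')|\le 2\sup|\partial_uT|\,|u-u'|.
\]
On $(0,u_0]$, $\Phi$ is $C^1$ and $\partial_yH=\partial_y\Phi-1\neq0$, so the classical implicit function theorem together with uniqueness shows $y^*$ is $C^1$ there.

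\textbf{Expansion.} Lipschitz continuity gives $y^*(u)-C_p=O(u)$. I then Taylor-expand the fixed-point identity $y^*=F(y^*)+uG(y^*)+O(u^\kappa)$ around $C_p$:
\[
  y^*-C_p=F'(C_p)(y^*-C_p)+uG(C_p)+O\bigl((y^*-C_p)^2\bigr)+O\bigl(u\,|y^*-C_p|\bigr)+O(u^\kappa).
\]
The quadratic and mixed terms are $O(u^2)\subset O(u^\kappa)$ because $\kappa<2$. Solving gives $y^*=C_p+D_pu+O(u^\kappa)$ with $D_p=G(C_p)/\beta$. Using $C_p^{q+1}=C_p\cdot\tfrac{p}{p+1}$,
\[
  G(C_p)=C_p\Bigl(\tfrac1p+\tfrac{p-1}{2p(p+1)}\Bigr)=C_p\,\tfrac{3p+1}{2p(p+1)},
\]
and hence $D_p=C_p\tfrac{(p-1)(3p+1)}{2p(p+1)^2}$, as claimed. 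For $z^*=(y^*)^q$, the expansion $(C_p+D_pu)^q=C_p^q+qC_p^{q-1}D_pu+o(u)$ gives the coefficient
\[
  \tfrac{p}{p-1}\cdot\tfrac{p}{p+1}\cdot\tfrac{(p-1)(3p+1)}{2p(p+1)^2}=\tfrac{p(3p+1)}{2(p+1)^3},
\]
which is \eqref{eq:z-drift}.

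\textbf{Main obstacle.} The delicate step is the lack of $C^1$ regularity of $\Phi$ in $u$ at $u=0$, which rules out the classical implicit function theorem there. The contraction argument handles this using only uniform bounds on $\partial_y\Phi$ and $\partial_u\Phi$ from \Cref{lem:y-map}. Care is still needed that all $O(\cdot)$ bounds are uniform on the shrunken interval, so that the same $u_0$ works throughout.
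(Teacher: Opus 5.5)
Your proof is correct. It follows the same overall strategy as the paper: a quantitative implicit-function argument driven by the non-degeneracy $1-F'(C_p)=\frac{p+1}{p-1}$ and the uniform remainders of \Cref{lem:y-map}, with the classical implicit function theorem used only for $u>0$. The mechanism differs at each step, however.

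For existence and uniqueness, the paper uses a purely one-dimensional argument. It shrinks $J$ and $u_0$ so that $\partial_y H\le -m<0$ on $(0,u_0]\times J$ and $H$ changes sign at the endpoints of $J$, then applies the intermediate value theorem and strict monotonicity. You use the chord map $T_u$ and Banach's fixed point theorem instead.

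For the expansion, the paper needs no a priori rate. It inserts the ansatz $\tilde y(u)=C_p+D_pu$, shows $H(u,\tilde y(u))=O(u^\kappa)$, and converts this residual into $|y^*(u)-\tilde y(u)|\le m^{-1}|H(u,\tilde y(u))|$ via the mean value theorem. It proves Lipschitz continuity last, by implicit differentiation, $|(y^*)'|\le M/m$ on $(0,u_0]$. You reverse the order: you first obtain the Lipschitz bound from the parametric contraction estimate, then use $y^*-C_p=O(u)$ to linearize the fixed-point identity.

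Your route is dimension-free, so it would work verbatim for vector-valued $y$, and it gives continuity at $u=0$ and the Lipschitz bound in one stroke. The paper's route is slightly more economical in the scalar setting. The residual-plus-monotonicity estimate yields the expansion directly, and the paper never evaluates $\Phi$ at $u=0$; it simply sets $y^*(0)=C_p$.

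In your version you should state explicitly that $\Phi(0,\cdot)=F$ is an extension by definition, justified by $\Phi=F+uG+O(u^\kappa)$ with $\kappa>1$. The point $u=0$ is not an actual Frank--Wolfe state, and your Banach step at $u=0$ and your Lipschitz estimate across $u'=0$ both rely on this extension.
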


\begin{proof}\linkofproof{prop:z-drift}
    Choose a closed interval $J=[C_p-\delta_0,C_p+\delta_0]\subset(0,p^{1/q})$.
    By \Cref{lem:y-map}, after possibly shrinking $u_0>0$ there exist constants
    $C_0,C_1,C_2>0$ and remainder functions $R_0,R_1,R_2$ on
    $(0,u_0]\times J$ such that
    \begin{align}
        H(u,y)\defi \Phi(u,y)-y
        &= F(y)-y+uG(y)+R_0(u,y), \label{eq:H-expand}\\
        \frac{\partial}{\partial y} H(u,y)
        &= F'(y)-1+uG'(y)+R_1(u,y), \label{eq:Hy-expand}\\
        \frac{\partial}{\partial u} H(u,y)
        &= G(y)+R_2(u,y), \label{eq:Hu-expand}
    \end{align}
    with
    \begin{equation}
        \label{eq:R-bounds}
        |R_0(u,y)|\le C_0u^\kappa,
        \qquad
        |R_1(u,y)|\le C_1u^\kappa,
        \qquad
        |R_2(u,y)|\le C_2u^{\kappa-1}
    \end{equation}
    for all $(u,y)\in(0,u_0]\times J$.
    The function $F(y) = y^{1-q} - \frac{y}{p}$ has two fixed points at $y=0$ and $y=C_p$, 
    \[
        F(y) = y^{1-q}-\frac{y}{p}=y \;\Longleftrightarrow\;
        y^{1-q}=\frac{p+1}{p}\,y \;\Longrightarrow\;
        y^q=\frac{p}{p+1} \;\Longleftrightarrow\;
        y=\left(\frac{p}{p+1}\right)^{1/q} = C_p.
    \]
    In particular, $F$ has a unique fixed point on $J \subset (0,p^{1/q})$.
    Moreover,
    \begin{align*}
    F'(C_p)-1
        % &=(1-q)C_p^{-q}-\frac{1}{p}-1\\
        % &=(1-q)\frac{p+1}{p}-\frac{1}{p}-1\\
        % &=-\frac{1}{p-1}\cdot\frac{p+1}{p}-\frac{1}{p}-1\\
        % &=-\frac{p+1}{p(p-1)}-\frac{p-1}{p(p-1)}-1\\
        % &=-\frac{2p}{p(p-1)}-1\\
        &=-\frac{2}{p-1}-1
        =-\frac{p+1}{p-1}<0.
    \end{align*}
    By continuity of $F(y)-y$ and $F'(y)-1$ on $J$, after shrinking $\delta_0$ if
    needed there exist constants $m,\eta>0$ such that
    \[
        F'(y)-1\le -2m
    \]
    holds for all $y\in J$, and
    \[
        F(C_p-\delta_0)-(C_p-\delta_0)\ge 2\eta,
        \qquad
        F(C_p+\delta_0)-(C_p+\delta_0)\le -2\eta,
    \]
    By shrinking $u_0$ further, we can ensure that
    \[
        u\sup_{y\in J}|G'(y)|+C_1u^\kappa\le m,\qquad
        \qquad
        u\sup_{y\in J}|G(y)|+C_0u^\kappa\le \eta,
        \qquad
        C_2u^{\kappa-1}\le 1
    \]
    for all $u\in(0,u_0]$.
    Then
    \[
        \frac{\partial}{\partial y} H(u,y) = F'(y)-1+uG'(y)+R_1(u,y) \le -2m + m = -m < 0
    \]
    holds for all $u\in(0,u_0]$ and $y\in J$.
    Furthermore,
    \begin{align*}
        H(u,C_p-\delta_0) &= F(C_p-\delta_0)-(C_p-\delta_0) + uG(C_p-\delta_0) + R_0(u,C_p-\delta_0) \\
        &\ge 2\eta - \eta = \eta > 0, \\
        H(u,C_p+\delta_0) &= F(C_p+\delta_0)-(C_p+\delta_0) + uG(C_p+\delta_0) + R_0(u,C_p+\delta_0) \\
        &\le -2\eta + \eta = -\eta < 0
    \end{align*}
    holds for all $u\in(0,u_0]$.
    Hence, by the intermediate value theorem and strict monotonicity in $y$, for
    each $u\in(0,u_0]$ there exists a unique $y^*(u)\in J$ such that
    $H(u,y^*(u))=0$, i.e.\ $\Phi(u,y^*(u))=y^*(u)$.
    Set $y^*(0)\defi C_p$.

    For $u>0$, the map $\Phi$ is $C^1$ in $(u,y)$ on $(0,u_0]\times J$, so the
    classical implicit function theorem applied at each point
    $(u,y^*(u))$ shows that $y^*$ is $C^1$ on $(0,u_0]$.

    In the next step we derive the expansion of $y^*(u)$ around $u=0$.
    First note that 
    \[
        \frac{G(C_p)}{1-F'(C_p)} = \frac{C_p/p+(p-1)/(2p^2)\,C_p^{q+1}}{1-F'(C_p)} = C_p\cdot \frac{(p-1)(3p+1)}{2p(p+1)^2} = D_p.
    \]
    Set $\tilde y(u)\defi C_p+D_pu$.
    Since $F$ and $G$ are $C^2$ on $J$, we can expand each term in
    \[
        H(u,\tilde y(u))
        =F(\tilde y(u))-\tilde y(u)+uG(\tilde y(u))+R_0(u,\tilde y(u)).
    \]
    Using $\tilde y(u)-C_p=D_pu$ and $F(C_p)=C_p$, Taylor's theorem gives
    \[
        F(\tilde y(u))-\tilde y(u)
        =\bigl(F'(C_p)-1\bigr)\,D_pu+O(u^2).
    \]
    Likewise,
    \[
        G(\tilde y(u))=G(C_p)+O(u),
        \qquad
        uG(\tilde y(u))=uG(C_p)+O(u^2),
    \]
    and from \eqref{eq:H-expand},
    \[
        R_0(u,\tilde y(u))=O(u^\kappa).
    \]
    Therefore
    \[
        H(u,\tilde y(u))
        = \bigl(F'(C_p)-1\bigr)D_p\,u + uG(C_p) + O(u^2) + O(u^\kappa).
    \]
    By the choice of $D_p$, the coefficient of $u$ vanishes, so
    \[
        H(u,\tilde y(u)) = O(u^\kappa)
    \]
    because $\kappa<2$.
    After shrinking $u_0$ once more, we have $\tilde y(u)\in J$ for all
    $u\in[0,u_0]$.
    By the mean value theorem, for each $u\in(0,u_0]$ there exists $\xi_u$ between
    $y^*(u)$ and $\tilde y(u)$ such that
    \[
        0 = H(u,y^*(u))
        = H(u,\tilde y(u))
        + \frac{\partial}{\partial y} H(u,\xi_u)\,\bigl(y^*(u)-\tilde y(u)\bigr).
    \]
    Since $\frac{\partial}{\partial y} H(u,\xi_u)\le -m$, it follows that
    \[
        |y^*(u)-\tilde y(u)| \le \frac{1}{m}\,|H(u,\tilde y(u))| = O(u^\kappa).
    \]
    Therefore
    \[
        y^*(u)=C_p+D_pu+O(u^\kappa),
    \]
    which in particular proves that $y^*$ is continuous at $u=0$.

    Finally, $z^*(u)=y^*(u)^q$ and a first-order expansion gives \eqref{eq:z-drift}.
    By \eqref{eq:Hu-expand} and \eqref{eq:R-bounds}, $|\frac{\partial}{\partial u} H(u,y)|\le M$ on $(0,u_0]\times J$ for some $M>0$.
    Implicit differentiation on $(0,u_0]$ yields
    \[
    (y^*)'(u) = -\frac{\partial_u H(u,y^*(u))}{\partial_y H(u,y^*(u))},
    \]
    hence $|(y^*)'(u)|\le M/m$ for all $u\in(0,u_0]$.
    Together with the continuity at $u=0$, this shows that $y^*$ is Lipschitz
    on $[0,u_0]$.
\end{proof}

\begin{lemma}\label{lem:slow-start-feasible}\linktoproof{lem:slow-start-feasible}
    Fix $p\ge 3$. There exists $u_{\mathrm{feas}}>0$ such that for all
    $u_0\in(0,u_{\mathrm{feas}}]$, the point $x_0^{\mathrm{slow}}(u_0)$ from \eqref{eq:slow-start-init} lies strictly inside $B_p$.
\end{lemma}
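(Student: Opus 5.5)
We need to show that $\norm{x_0^{\mathrm{slow}}(u_0)}_p < 1$ for all sufficiently small $u_0$. The plan is to compute the $p$-th power of the norm directly and expand it in $u_0$.

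By definition \eqref{eq:slow-start-init}, the first coordinate is $1-u_0$ and the second is $y_0 u_0^{1+\alpha}$ with $y_0=C_p+D_pu_0$. For $u_0\in(0,1)$ the first coordinate is positive, so
\[
    \norm{x_0^{\mathrm{slow}}(u_0)}_p^p = (1-u_0)^p + |y_0|^p\, u_0^{p(1+\alpha)}.
\]
Since $\alpha=(p-1)/p$, we have $p(1+\alpha)=2p-1$. The second term is therefore $|C_p+D_pu_0|^p\,u_0^{2p-1}$.

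First, the prefactor is bounded. Restrict to $u_0\le 1$; then $|C_p+D_pu_0|\le C_p+|D_p|=:K$, so the second term is at most $K^p u_0^{2p-1}$.

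Second, we bound the first term. Because $(1-u)^p$ is convex on $[0,1]$, it lies below the chord from $(0,1)$ to $(1,0)$, which only gives $(1-u_0)^p\le 1-u_0$. That is enough: combining the two bounds yields
\[
    \norm{x_0^{\mathrm{slow}}(u_0)}_p^p \le 1-u_0+K^p u_0^{2p-1} = 1-u_0\bigl(1-K^pu_0^{2p-2}\bigr).
\]
Since $2p-2\ge 4>0$, we may choose $u_{\mathrm{feas}}\defi \min\{1,\tfrac12 K^{-p/(2p-2)}\}$. For $u_0\in(0,u_{\mathrm{feas}}]$ the bracket exceeds $0$, so the right-hand side is strictly less than $1$. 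Hence $\norm{x_0^{\mathrm{slow}}(u_0)}_p<1$, which places the point in the interior of $B_p$.

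No step here is a real obstacle. The only point to handle is the sign and size of $y_0$, which depends on $D_p$. Taking an absolute-value bound with the constant $K$ covers every case; in fact $D_p>0$, so $y_0>0$. Finally, by \Cref{prop:confinement} it suffices to work in $d=2$, and for larger $d$ the remaining coordinates are zero and do not change the norm.
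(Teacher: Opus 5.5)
Your proof is correct and follows essentially the paper's argument. Both write $\norm{x_0^{\mathrm{slow}}(u_0)}_p^p=(1-u_0)^p+|y_0|^p u_0^{2p-1}$ and use that the linear decrease of the first term dominates the $O(u_0^{2p-1})$ second term. The paper does this locally via $g(0)=1$ and $g'(0)=-p<0$, whereas your bound $(1-u_0)^p\le 1-u_0$ gives a global inequality and an explicit $u_{\mathrm{feas}}$, which also makes the paper's somewhat terse ``by continuity'' step fully rigorous.
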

\begin{proof}\linkofproof{lem:slow-start-feasible}
    Write $x_0^{\mathrm{slow}}(u)=(1-u, w(u))$ with
    $w(u)=(C_p+D_pu)\,u^{1+\alpha}$ and $\alpha=(p-1)/p$.
    Define
    \[
        g(u)\defi \norm{x_0^{\mathrm{slow}}(u)}_p^p = (1-u)^p + |w(u)|^p, \qquad u\in[0,1].
    \]
    Moreover, since $p(1+\alpha)=2p-1>1$, the term $|w(u)|^p$ satisfies
    $|w(u)|^p=o(u)$ as $u\downarrow 0$ and thus $\left. \frac{\partial}{\partial u} |w(u)|^p \right|_{u=0} = 0$
    and
    \[
        g'(0)= \left. \frac{\partial}{\partial u} (1-u)^p \right|_{u=0} = -p<0.
    \]
    Since $g$ is continuous on $[0,1]$ and $g(0)=1$, there exists $u_{\mathrm{feas}}>0$ such that
    $g(u)<1$ for all $u\in(0,u_{\mathrm{feas}}]$, which implies $x_0^{\mathrm{slow}}(u)\in\operatorname{int}(B_p)$.
\end{proof}

\begin{lemma}\label{lem:phi-local-contraction}\linktoproof{lem:phi-local-contraction}
    Fix $p\ge 3$ and let $y^*$ be the fixed point curve from \Cref{prop:z-drift} with $y^*(u)=C_p+D_pu+O(u^\kappa)$. Then
    \begin{itemize}
        \item If $p>3$, then there exist $u_{\mathrm{ctr}}>0$, $\delta>0$, and
        $\lambda\in(0,1)$ such that for all $u\in(0,u_{\mathrm{ctr}}]$ and all $y$
        with $|y-y^*(u)|\le \delta$,
        \[
            \left|\frac{\partial}{\partial y}\Phi(u,y)\right|\le \lambda.
        \]
        \item If $p=3$, then for every $K_0>0$ there exist $u_{\mathrm{ctr}}>0$ and
        $c>0$ such that for all $u\in(0,u_{\mathrm{ctr}}]$ and all $y$ with
        $|y-y^*(u)|\le K_0 u^\kappa$,
        \[
            \left|\frac{\partial}{\partial y}\Phi(u,y)\right|\le 1-cu.
        \]
    \end{itemize}
\end{lemma}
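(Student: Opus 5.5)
The plan is to evaluate the derivative expansion from \Cref{lem:y-map} along the fixed-point curve of \Cref{prop:z-drift}. Fix a compact interval $J=[C_p-\delta_0,C_p+\delta_0]\subset(0,p^{1/q})$, as in the proof of \Cref{prop:z-drift}. On $J$, \Cref{lem:y-map} gives, uniformly,
\[
\partial_y\Phi(u,y)=F'(y)+uG'(y)+O(u^\kappa).
\]
Everything then reduces to the value of $F'$ at $C_p$ and, when $p=3$, to a first-order correction.

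First I compute $F'(C_p)$. We have $F'(y)=(1-q)y^{-q}-\tfrac1p$. Using $C_p^{-q}=\tfrac{p+1}{p}$ and $1-q=-\tfrac1{p-1}$, this gives
\[
F'(C_p)=-\frac{p+1}{p(p-1)}-\frac1p=-\frac{2}{p-1}.
\]
This agrees with the value $F'(C_p)-1=-\tfrac{p+1}{p-1}$ used in \Cref{prop:z-drift}.

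\emph{Case $p>3$.} Here $|F'(C_p)|=\tfrac2{p-1}<1$. Set $\lambda\defi\tfrac12\bigl(1+\tfrac{2}{p-1}\bigr)<1$. By continuity of $F'$, choose $\delta'>0$ with $|F'(y)|\le \tfrac{2}{p-1}+\tfrac14(1-\tfrac{2}{p-1})$ for $|y-C_p|\le 2\delta'$. Take $\delta\defi\min(\delta',\delta_0/2)$. Since $y^*$ is continuous at $0$ with $y^*(0)=C_p$, shrinking $u_{\mathrm{ctr}}$ gives $|y^*(u)-C_p|\le\delta$. Then $|y-y^*(u)|\le\delta$ forces $|y-C_p|\le 2\delta$, so $y\in J$ and the bound on $F'$ applies. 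Finally, shrink $u_{\mathrm{ctr}}$ so that $u\sup_J|G'|+Cu^\kappa\le\tfrac14(1-\tfrac2{p-1})$. This yields $|\partial_y\Phi|\le\lambda$.

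\emph{Case $p=3$.} This is the main obstacle, because $F'(C_3)=-1$ and the leading term gives no contraction; the first-order term in $u$ must be used. Here $q=\tfrac32$ and $\kappa=\tfrac43>1$. For $|y-y^*(u)|\le K_0u^\kappa$, \Cref{prop:z-drift} gives $y=C_p+D_pu+O(u^\kappa)$, with the constant depending on $K_0$. Taylor expanding $F'$ (which is $C^2$ on $J$) and $G'$ then gives
\[
\partial_y\Phi(u,y)=-1+\bigl(F''(C_p)D_p+G'(C_p)\bigr)u+O(u^\kappa),
\]
uniformly over such $y$. It remains to check the sign of the coefficient:
\begin{itemize}
\item $F''(y)=q(q-1)y^{-q-1}>0$;
\item $D_p>0$ by its explicit formula;
\item $G'(y)=\tfrac1p+\tfrac{(p-1)(q+1)}{2p^2}y^q>0$.
\end{itemize}
Hence $a\defi F''(C_p)D_p+G'(C_p)>0$. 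Set $c\defi a/2$ and choose $u_{\mathrm{ctr}}$ so small that the $O(u^\kappa)$ remainder is at most $cu$ (possible since $\kappa>1$) and $au<1$. Then $\partial_y\Phi\in[-1+cu,\,-1+2a u]\subset(-1,0)$, so $|\partial_y\Phi|\le 1-cu$.

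The only delicate bookkeeping is keeping all remainder constants uniform, which holds because every point considered lies in the fixed compact interval $J$ where \Cref{lem:y-map} is uniform.
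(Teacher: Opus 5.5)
Your proposal is correct and follows essentially the same route as the paper: you use the uniform expansion $\partial_y\Phi=F'(y)+uG'(y)+O(u^\kappa)$ from \Cref{lem:y-map} near $C_p$, then $|F'(C_p)|=\tfrac{2}{p-1}<1$ for $p>3$, and for $p=3$ the positive first-order coefficient $F''(C_3)D_3+G'(C_3)$ along $y=C_3+D_3u+O(u^\kappa)$. The differences are cosmetic: you show that coefficient is positive term by term rather than evaluating it to $\tfrac34$ (your $c=a/2$ then gives the paper's $c=\tfrac38$), and your inclusion $[-1+cu,-1+2au]\subset(-1,0)$ actually needs $au<\tfrac12$, though the final bound $|\partial_y\Phi|\le 1-cu$ already holds under your condition $au\le 1$.
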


\begin{proof}\linkofproof{lem:phi-local-contraction}
    By \Cref{lem:y-map}, $\frac{\partial}{\partial y}\Phi(u,y)=F'(y)+uG'(y)+O(u^\kappa)$ uniformly for
    $y$ in a compact neighborhood of $C_p$. By \Cref{prop:z-drift},
    $y^*(u)=C_p+D_pu+O(u^\kappa)$.

    If $p>3$, then $F'(C_p)=-2/(p-1)$ and $|F'(C_p)|<1$. By continuity of
    $\frac{\partial}{\partial y}\Phi$ in $(u,y)$, there exist $u_{\mathrm{ctr}}>0$, $\delta>0$, and
    $\lambda\in(0,1)$ such that $|\frac{\partial}{\partial y}\Phi(u,y)|\le\lambda$ whenever
    $u\in(0,u_{\mathrm{ctr}}]$ and $|y-y^*(u)|\le\delta$.

    Now assume $p=3$. Then $F'(C_3)=-1$.
    Using $y^*(u)=C_3+D_3u+O(u^\kappa)$ and expanding $F'(y^*(u))$ and $G'(y^*(u))$ around
    $C_3$ yields
    \begin{align*}
        F'(y^*(u)) &= F'(C_3) + F''(C_3)(y^*(u)-C_3) + \mathcal{O}((y^*(u)-C_3)^2) \\
        &= F'(C_3) + F''(C_3)D_3u + O(u^\kappa)
    \end{align*}
    and
    \begin{align*}
        uG'(y^*(u)) &= uG'(C_3) + uG''(C_3)(y^*(u)-C_3) + \mathcal{O}((y^*(u)-C_3)^2) \\
        &= uG'(C_3) + G''(C_3)D_3u^2 + O(u^\kappa)\\
        &= uG'(C_3) + O(u^\kappa).
    \end{align*}
    Therefore,
    \[
        \frac{\partial}{\partial y}\Phi(u,y^*(u)) = F'(C_3) + u\bigl(G'(C_3)+F''(C_3)D_3\bigr) + O(u^\kappa).
    \]
    For $p=3$ we have $q=3/2$ and $C_3^q=3/4$, so
    \[
        G'(C_3)
        = \frac13 + \frac{p-1}{2p^2}(q+1)C_3^q
        = \frac13 + \frac{5}{18}\cdot \frac34
        = \frac{13}{24},
        \qquad
        F''(C_3)D_3
        = C_3^{-1}\cdot \frac{5}{24}C_3
    = \frac{5}{24}.
    \]
    Thus $G'(C_3)+F''(C_3)D_3=18/24=3/4$. Therefore,
    \[
        \frac{\partial}{\partial y}\Phi(u,y^*(u)) = -1+\tfrac34 u + O(u^\kappa).
    \]
    Next, fix $K_0>0$ and consider $y$ with $|y-y^*(u)|\le K_0 u^\kappa$.
    By \Cref{lem:y-map}, $\frac{\partial}{\partial y}\Phi(u,y)=F'(y)+uG'(y)+O(u^\kappa)$ uniformly for
    $y$ in a compact neighborhood of $C_3$. Since $F'$ and $G'$ are $C^1$ there, we
    have $F'(y)=F'(y^*(u))+O(|y-y^*(u)|)$ and
    $G'(y)=G'(y^*(u))+O(|y-y^*(u)|)$, hence
    \[
        \frac{\partial}{\partial y}\Phi(u,y)
        = \frac{\partial}{\partial y}\Phi(u,y^*(u)) + O(u^\kappa) + O(K_0 u^\kappa)
        = -1+\tfrac34 u + O(u^\kappa).
    \]
    Since $\kappa=4/3>1$, we have $u^\kappa=o(u)$, so for all sufficiently small
    $u$ (depending on $K_0$), the function $\varepsilon(u,y)\defi \frac{\partial}{\partial y}\Phi(u,y)+1$
    satisfies $\varepsilon(u,y)\ge \tfrac38 u$.
    Therefore $\frac{\partial}{\partial y}\Phi(u,y)\in(-1,0)$ and
    $|\frac{\partial}{\partial y}\Phi(u,y)| = 1-\varepsilon(u,y) \le 1-\tfrac38 u$.
    This proves the claim with any $c\le \frac{3}{8}$.
\end{proof}

\begin{proposition}\label{prop:slow-start-tracking}\linktoproof{prop:slow-start-tracking}
    Fix $p\ge 3$.
    There exist $u_1>0$, a neighborhood $I\subset(0,p^{1/q})$ of $C_p$, and a constant
    $K>0$ such that for all $u_0\in(0,u_1]$, the FW trajectory from
    $x_0=x_0^{\mathrm{slow}}(u_0)$ satisfies for all $t\ge 0$:
    \[
        u_t\le u_0,\qquad y_t\in I,\qquad d_{1,t}>0,
        \qquad
        |y_t-y^*(u_t)| \le K u_t^\kappa.
    \]
    In particular,
    \begin{equation}
        \label{eq:z-drift-slow-start}
        z_t \defi y_t^q
        = \frac{p}{p+1} + \frac{p(3p+1)}{2(p+1)^3}\,u_t + o(u_t).
    \end{equation}
\end{proposition}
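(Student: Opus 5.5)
The plan is an induction on $t$ with the hypothesis
\[
\mathcal{H}_t:\qquad u_t\le u_0,\quad y_t\in I,\quad w_t\neq 0,\quad |y_t-y^*(u_t)|\le K u_t^\kappa .
\]
Take $I=J$ to be the compact interval around $C_p$ from \Cref{prop:z-drift}. Shrink it if necessary so that the expansions of \Cref{lem:y-map} hold uniformly on it, and so that for $p>3$ it lies inside the $\delta$-tube of \Cref{lem:phi-local-contraction}. The order of choices matters, especially for $p=3$. I first fix $K$ (depending only on $p$), and only afterwards choose $u_1$ small enough, depending on $K$.

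\textbf{Base case.} By \Cref{lem:slow-start-feasible} the point $x_0^{\mathrm{slow}}(u_0)$ is feasible, and $w_0=y_0u_0^{1+\alpha}\neq 0$. \Cref{prop:z-drift} gives $|y_0-y^*(u_0)|=|C_p+D_pu_0-y^*(u_0)|\le C_* u_0^\kappa$. So $\mathcal{H}_0$ holds as soon as $K\ge C_*$ and $u_1$ is small.

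\textbf{Monotonicity of $u$.} Assume $\mathcal{H}_t$. Since $y_t\in I$, \eqref{eq:d1-expand} gives $d_{1,t}>0$, and then \Cref{cor:ratio-identity} gives $w_{t+1}\neq 0$. Write $u_{t+1}=u_t(1-\eta_t)$ with $\eta_t=\gamma_t d_{1,t}/u_t$. We have $\gamma_t>0$ because $M\ge u$ gives $M-u+h>0$ in \eqref{eq:gamma-uw}. We have $\eta_t<1$ because $u_{t+1}>0$. Hence $0<\eta_t\le C_\eta u_t^\kappa$ by the bound $\eta=O(u^\kappa)$ from the proof of \Cref{lem:y-map}. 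In particular $u_{t+1}<u_t\le u_0$.

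\textbf{Error recursion.} Let $e_t=|y_t-y^*(u_t)|$. Splitting through $\Phi(u_t,y^*(u_t))=y^*(u_t)$ gives
\[
e_{t+1}\le |\Phi(u_t,y_t)-\Phi(u_t,y^*(u_t))| + |y^*(u_t)-y^*(u_{t+1})|.
\]
For the first term, apply the mean value theorem on the segment between $y_t$ and $y^*(u_t)$. This segment lies in the tube where \Cref{lem:phi-local-contraction} applies, with $K_0=K$ when $p=3$. The second term is controlled by the Lipschitz property of $y^*$ (constant $L_*$) from \Cref{prop:z-drift}: it is at most $L_*(u_t-u_{t+1})=L_*\eta_tu_t\le L_*C_\eta u_t^{1+\kappa}$. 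This yields
\[
e_{t+1}\le \lambda_t K u_t^\kappa + L' u_t^{1+\kappa},\qquad \lambda_t=\lambda\ \ (p>3),\quad \lambda_t=1-cu_t\ \ (p=3).
\]
The target is $K u_{t+1}^\kappa=Ku_t^\kappa(1-\eta_t)^\kappa\ge Ku_t^\kappa(1-C'u_t^\kappa)$.

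\textbf{Closing the induction.} For $p>3$ it suffices that $\lambda+L'u/K\le 1-C'u^\kappa$, which holds for small $u$. The case $p=3$ is the main obstacle, because the contraction is only $1-cu$ and the drift of the slow curve is of the same order $u\cdot u^\kappa$. We need
\[
(1-cu)K+L'u\le K(1-C'u^\kappa),
\]
that is, $cKu\ge L'u+KC'u^{\kappa}$. The key point is that $c$ can be taken as $3/8$ independently of $K_0$; only $u_{\mathrm{ctr}}$ depends on $K_0$, as in the proof of \Cref{lem:phi-local-contraction}. So I first choose $K\ge\max\{C_*,4L'/c\}$. Since $\kappa=4/3>1$, the term $KC'u^\kappa$ is $o(u)$, and I then shrink $u_1$ until $KC'u^{\kappa}\le cKu/2$ for $u\le u_1$. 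One also has to check that $C_\eta$ and $L'$ do not depend on $K$; they come from uniform bounds on the fixed interval $I$. The same smallness of $u_1$ ensures that $y^*(u_{t+1})\pm Ku_{t+1}^\kappa$ stays in $I$, which gives $y_{t+1}\in I$. This completes $\mathcal{H}_{t+1}$.

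\textbf{The consequence \eqref{eq:z-drift-slow-start}.} Write $y_t=y^*(u_t)+O(u_t^\kappa)$ with $\kappa>1$, raise to the power $q$ (smooth near $C_p$), and apply \eqref{eq:z-drift}. This gives $z_t=z^*(u_t)+O(u_t^\kappa)=\frac{p}{p+1}+\frac{p(3p+1)}{2(p+1)^3}u_t+o(u_t)$.
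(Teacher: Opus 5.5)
Your proposal is correct and follows the paper's proof: the same induction on the tube $|y_t-y^*(u_t)|\le Ku_t^\kappa$, closed with the mean value theorem and \Cref{lem:phi-local-contraction} for the contraction term, the Lipschitz drift of $y^*$ together with $u_t-u_{t+1}=O(u_t^{1+\kappa})$ for the drift term, and $K$ fixed before $u_1$ when $p=3$. Your observation that $c=3/8$ works for every $K_0$ makes explicit a quantifier-order point that the paper's proof leaves implicit. The paper additionally checks that the unconstrained line-search minimizer satisfies $\gamma^*\le 1$, so the formulas of \Cref{prop:uw-dynamics} give the actual steps, and it proves $u_t\downarrow 0$, which is used downstream; both are worth adding.
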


\begin{proof}\linkofproof{prop:slow-start-tracking}
    Fix a compact interval $I$ around $C_p$ such that $I\subset(0,p^{1/q})$.
    In the following we choose $u_1$ by iteratively shrinking to satisfy the desired properties.
    Since $y^*(0)=C_p$ and $y^*$ is continuous, we may assume $y^*([0,u_1])\subset \operatorname{int}(I)$ for sufficiently small $u_1$.
    Also shrink $u_1\le u_{\mathrm{feas}}$ from \Cref{lem:slow-start-feasible} so
    that $x_0^{\mathrm{slow}}(u_0)\in B_p$ for all $u_0\in(0,u_1]$.
    If $p>3$, let $(u_{\mathrm{ctr}},\delta,\lambda)$ be as in
    \Cref{lem:phi-local-contraction} and shrink $u_1\le u_{\mathrm{ctr}}$.

    We begin by showing that the \gls{fw} trajectory from $x_0^{\mathrm{slow}}(u_0)$ is well-defined
    and satisfies $d_{1,t}>0$.
    For $(u,y)\in(0,u_1]\times I$ we have $y^q<p$, hence by \eqref{eq:d1-expand}
    \[
        d_1 = u+(1+y^q u)^{-1/p}-1 = u\Bigl(1-\frac{y^q}{p}\Bigr)+O(u^2)>0
    \]
    for all sufficiently small $u_1$ (uniformly over $y\in I$).
    Since $\gamma\ge 0$, this implies $u' = u-\gamma d_1 \le u$ and thus
    $u_t\le u_0$ for all $t$.
    Moreover, since $w_0\neq 0$ and $d_{1,t}>0$, \Cref{cor:ratio-identity} implies
    $w_t\neq 0$ for all $t$, hence $y_t$ is well-defined for all $t$.

    Next we prove a uniform drift bound for $u$ showing that $u$ is decreasing slowly enough to ensure later that $y_t$ stays close enough to $y^*(u_t)$.
    By \eqref{eq:gamma-expand}, on the compact set $(0,u_1]\times I$ the unconstrained line-search minimizer satisfies $\gamma^*=O(u^\kappa)$ uniformly.
    Shrinking $u_1$ further ensures $\gamma^*\le 1$ on $(0,u_1]\times I$, so the
    exact line search over $\gamma\in[0,1]$ uses $\gamma=\gamma^*$ throughout
    this region and thus the identities in \Cref{prop:uw-dynamics,cor:ratio-identity} apply.
    Moreover, $d_1=O(u)$ uniformly on $(0,u_1]\times I$.
    Therefore there exists $B>0$ depending only on $p$ and $I$ such that
    \begin{equation}\label{eq:u-drift-bound}
        0 \le u-u' = \gamma d_1 \le B u^{\kappa+1}
    \end{equation}
    for all $(u,y)\in(0,u_1]\times I$.
    In particular, $u' \ge u(1-Bu^\kappa)$ and for $u_1$ small enough
    $u'\in(0,u_1]$ whenever $u\in(0,u_1]$.

    Now we can prove that $y_t$ stays close to $y^*(u_t)$, i.e. $|y_t-y^*(u_t)|\le K u_t^\kappa$ for some constant $K>0$.
    Let $e_t\defi y_t-y^*(u_t)$ and let $L>0$ be a Lipschitz constant of $y^*$
    on $[0,u_1]$.
    Since $\Phi$ is continuous, the intermediate value theorem implies there exists some $\xi_t$ between $y_t$ and
    $y^*(u_t)$ such that,
    \[
        y_{t+1}-y^*(u_t)
        = \Phi(u_t,y_t)-\Phi(u_t,y^*(u_t))
        = \frac{\partial}{\partial y}\Phi(u_t,\xi_t)\,e_t.
    \]
    Hence,
    \begin{align}
        \label{eq:e-recursion}
        |e_{t+1}|
        &= |y_{t+1}-y^*(u_{t+1})| \notag\\
        &\leq |y_{t+1}-y^*(u_t)| + |y^*(u_t)-y^*(u_{t+1})| \notag\\
        &\leq \left|\frac{\partial}{\partial y}\Phi(u_t,\xi_t)\right|\,|e_t| + L|u_{t+1}-u_t|.
    \end{align}
    We claim that for sufficiently small $u_0\le u_1$ there exists $K>0$ such that
    \begin{equation}\label{eq:tube}
        |e_t|\le K u_t^\kappa
        \qquad\text{for all }t\ge 0,
    \end{equation}
    which we prove by induction below.

    First note that $y_0=C_p+D_pu_0$ and $y^*(u)=C_p+D_pu+O(u^\kappa)$, so after shrinking $u_1$
    we may choose $K$ such that $|e_0|\le K u_0^\kappa$ for all $u_0\in(0,u_1]$.
    
    % If $p=3$, additionally invoke \Cref{lem:phi-local-contraction} with $K_0=K$ and
    % shrink $u_1$ so that the conclusion holds on $(0,u_1]$ with some constant $c>0$.

    Assume now inductively that $|e_t|\le K u_t^\kappa$.
    Then $\xi_t$ lies between $y_t$ and $y^*(u_t)$, so $|\,\xi_t-y^*(u_t)\,|\le K u_t^\kappa$.
    Next we apply \Cref{lem:phi-local-contraction} to bound $|\partial_y\Phi(u_t,\xi_t)|$.
    If $p>3$, shrink $u_1$ so that $K u_1^\kappa\le\delta$ and thus
    $|\,\xi_t-y^*(u_t)\,|\le K u_t^\kappa \leq \delta$.
    \Cref{lem:phi-local-contraction} then implies
    \[
        |\partial_y\Phi(u_t,\xi_t)|
        \le \lambda.
    \]
    If $p=3$, \Cref{lem:phi-local-contraction} with $K_0=K$ implies there exists $c>0$ such that
    \[
        |\partial_y\Phi(u_t,\xi_t)|\le 1-cu_t.
    \]
    Next we apply the uniform drift bound \eqref{eq:u-drift-bound} implying $|u_{t+1}-u_t|\le B u_t^{\kappa+1}$.
    Plugging these into \eqref{eq:e-recursion} gives
    \[
        |e_{t+1}|\le \lambda K u_t^\kappa + LB u_t^{\kappa+1}
    \]
    for $p>3$ and
    \[
        |e_{t+1}|\le (1-cu_t)K u_t^\kappa + LB u_t^{\kappa+1}
    \]
    for $p=3$.
    Using $u_{t+1}\ge u_t(1-Bu_t^\kappa)$, the inequality
    $(1-x)^\kappa\ge 1-\kappa x$ for $x\in[0,1]$ yields
    \[
        u_{t+1}^\kappa\ge u_t^\kappa(1-\kappa B u_t^\kappa).
    \]
    For $p>3$, choose $u_1$ small enough so that for all $u\in(0,u_1]$,
    $(1-\lambda)K \ge LB u + \kappa BK u^\kappa$ or equivalently
    $LBu + \lambda K \le K- \kappa BK u^\kappa$. This implies
    \begin{equation*}
        |e_{t+1}|
        \le \lambda K u_t^\kappa + LB u_t^{\kappa+1}
        = u_t^\kappa(K-\kappa BK u_t^\kappa)
        = K u_{t+1}^\kappa.
    \end{equation*}
    For $p=3$, choose $K$ large enough that $cK>LB$ and then choose $u_1$ small
    enough so that for all $u\in(0,u_1]$,
    $(cK-LB)u \ge \kappa BK u^\kappa$.
    This inequality again implies
    \begin{align*}
        |e_{t+1}|
        & \le (1-cu_t)K u_t^\kappa + LB u_t^{\kappa+1} \\
        & = u_t^\kappa (K - (cK - LB)u_t) \\
        & \le  u_t^\kappa (K - \kappa BK u_t^\kappa) \\
        & \le K u_{t+1}^\kappa. 
    \end{align*}
    In both cases, we obtain $|e_{t+1}|\le K u_{t+1}^\kappa$ completing the
    induction step and proving \eqref{eq:tube}.

    Next we shrink $u_1$ so that $K u_1^\kappa < \operatorname{dist}(y^*([0,u_1]), \partial I)$
    and thus
    \[
        |y_t-y^*(u_t)|\le K u_t^\kappa < \operatorname{dist}(y^*([0,u_1]), \partial I)
    \]
    for all $t\ge 0$.
    Since $u_t$ is decreasing and bounded below by $0$, we have $y^*(u_t)\in y^*([0,u_1])$ for all $t\ge 0$ and thus $y_t\in I$ for all $t\ge 0$.

    Finally, we prove that $u_t\downarrow 0$.
    Since $u_t$ is decreasing and bounded below by $0$, the limit
    $u_\infty\defi \lim_{t\to\infty}u_t$ exists.
    If $u_\infty>0$, then for all sufficiently large $t$ we have
    $u_t\in[u_\infty/2,u_1]$ and $y_t\in I$.
    On this compact set, the decrement $u-u'=\gamma d_1$ is a continuous and
    strictly positive function of $(u,y)$, hence has a positive minimum $\eta>0$.
    This would imply $u_{t+1}\le u_t-\eta$ for all large $t$ and thus $\{u_t\}_t$ would be unbounded, a contradiction.
    Therefore $u_t\downarrow 0$.

    Finally, \eqref{eq:tube} gives $e_t=O(u_t^\kappa)=o(u_t)$ (since $\kappa>1$),
    so $y_t=y^*(u_t)+o(u_t)$ and thus $z_t=y_t^q=z^*(u_t)+o(u_t)$.
    \Cref{eq:z-drift-slow-start} then follows from \eqref{eq:z-drift}.
\end{proof}

\begin{proposition}\label{prop:contraction-ap}\linktoproof{prop:contraction-ap}
    Along the slow-start trajectory of \Cref{prop:slow-start-tracking},
    \[
        1-s_t = a_p\,r_t^\kappa\,(1+o(1))
        \qquad\text{as }t\to\infty,
    \]
    where $a_p=2C_p^2=2\left(\frac{p}{p+1}\right)^{2(p-1)/p}$.
\end{proposition}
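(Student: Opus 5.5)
The plan is to compute the one-step decrease of $h_t=r_t^2$ exactly from the exact line search, and then evaluate it to leading order along the slow-start trajectory using \Cref{prop:slow-start-tracking}.

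\textbf{Step 1: exact decrease identity.} By \Cref{prop:slow-start-tracking} and the argument in its proof, along the trajectory we have $\gamma_t=\gamma^*\in[0,1]$. In the proof of \Cref{prop:uw-dynamics}, $f(x+\gamma(v-x))$ is an exact quadratic in $\gamma$. Minimizing it gives
\[
    h_{t+1}=h_t-\frac{(u d_1-w d_2)^2}{d_1^2+d_2^2}=h_t-\frac{(M-u+h)^2}{d_1^2+d_2^2},
\]
using the identity $ud_1-wd_2=M-u+h$ from \eqref{eq:gamma-uw-expand}, with everything evaluated at $(u_t,w_t)$. Hence
\[
    1-s_t^2=\frac{(M-u+h)^2}{h\,(d_1^2+d_2^2)}.
\]

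\textbf{Step 2: expand each factor in $u=u_t$ with $y=y_t\in I$.} All expansions below are uniform on $I$, as in \Cref{lem:y-map}.
\begin{itemize}
    \item Since $w=yu^{1+\alpha}$, we have $h=u^2(1+y^2u^{\kappa})=u^2(1+O(u^\kappa))$. Consequently $r_t=u_t(1+O(u_t^\kappa))$ and $r_t^\kappa=u_t^\kappa(1+o(1))$.
    \item Using $M=u(1+zu)^\alpha$ with $z=y^q$, refining \eqref{eq:M-u+h-expand} gives $M-u+h=u^2(1+\alpha z)+O(u^3)+O(u^{2+\kappa})$.
    \item By \eqref{eq:d2-abs-expand} and $A=1+O(u)$, we have $|d_2|=y^{q-1}u^{1/p}(1+O(u)+O(u^{\kappa}))$.
    \item Since $d_1=O(u)$ and $2/p<2$, the term $d_1^2$ is a relative $O(u^{2-2/p})$ correction, so $d_1^2+d_2^2=y^{2(q-1)}u^{2/p}(1+o(1))$.
\end{itemize}
Substituting,
\[
    1-s_t^2=(1+\alpha z_t)^2\,y_t^{-2(q-1)}\,u_t^{2-2/p}\,(1+o(1))=(1+\alpha z_t)^2\,z_t^{-2/p}\,u_t^{\kappa}(1+o(1)),
\]
where we used $y^{2(q-1)}=z^{2(q-1)/q}=z^{2/p}$ and $\kappa=2-2/p$.

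\textbf{Step 3: insert the tracking information.} By \Cref{prop:slow-start-tracking}, $u_t\downarrow0$, so $t\to\infty$ is the same limit as $u_t\to0$. By \eqref{eq:z-drift-slow-start}, $z_t\to p/(p+1)$. This gives
\[
    1+\alpha z_t\to 1+\tfrac{p-1}{p+1}=\tfrac{2p}{p+1},
\]
and therefore
\[
    (1+\alpha z_t)^2 z_t^{-2/p}\to 4\left(\tfrac{p}{p+1}\right)^{2-2/p}=4C_p^2,
\]
since $C_p^2=(p/(p+1))^{2/q}=(p/(p+1))^{2(p-1)/p}$. Thus $1-s_t^2=4C_p^2u_t^\kappa(1+o(1))\to0$, which forces $s_t\to1$. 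Then
\[
    1-s_t=\frac{1-s_t^2}{1+s_t}=2C_p^2u_t^\kappa(1+o(1))=2C_p^2r_t^\kappa(1+o(1)).
\]

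\textbf{Main obstacle.} I expect the delicate point to be the bookkeeping of error terms in Step 2. The leading constant comes from the $u^2$ coefficient of $M-u+h$, so I must check that the $O(u^2)$ estimate in \eqref{eq:M-u+h-expand} really has coefficient $1+\alpha z$ with a genuinely smaller remainder, uniformly in $y\in I$. I will do this by a second-order Taylor expansion of $(1+zu)^\alpha$ with $zu\le\tfrac12$. Beyond that, I only need $z_t\to p/(p+1)$; the finer drift in \eqref{eq:z-drift-slow-start} is not required for this statement.
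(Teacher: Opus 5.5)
Your proposal is correct and follows the paper's proof essentially step by step. It uses the same exact-line-search identity $1-s_t^2=(M-u+h)^2/\bigl(h(d_1^2+d_2^2)\bigr)$, the same leading-order expansions of $M-u+h$, $h$ and $d_1^2+d_2^2$ (giving $(1+\alpha z_t)^2 z_t^{-2/p}u_t^\kappa$), and the same passage to $1-s_t$ via $s_t\to 1$, $z_t\to p/(p+1)$ and $r_t^\kappa=u_t^\kappa(1+o(1))$. You also note explicitly that $\gamma_t$ is the unconstrained minimizer lying in $[0,1]$, as shown in the proof of \Cref{prop:slow-start-tracking}, which the paper's proof uses without comment.
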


\begin{proof}\linkofproof{prop:contraction-ap}
    For a quadratic, exact line search gives the identity
    \[
        h_{t+1} = h_t - \frac{\ip{\nabla f(x_t),x_t-v_t}^2}{4\norm{v_t-x_t}^2}.
    \]
    In $(u,w)$ coordinates, $\nabla f(x_t)=2(-u_t,w_t)$ and $v_t-x_t=(d_{1,t},d_{2,t})$,
    so $\ip{\nabla f(x_t),x_t-v_t}=2(u_t d_{1,t}-w_t d_{2,t})$ and
    $\norm{v_t-x_t}^2=d_{1,t}^2+d_{2,t}^2$.
    By \Cref{prop:uw-dynamics}, $u_t d_{1,t}-w_t d_{2,t}=M_t-u_t+h_t$, where
    $M_t=(u_t^q+|w_t|^q)^{1/q}$.
    Therefore, writing $s_t^2= \frac{h_{t+1}}{h_t}$,
    \begin{equation}\label{eq:sq-identity}
        1-s_t^2
        = \frac{h_t - h_{t+1}}{h_t}
        = \frac{(M_t-u_t+h_t)^2}{h_t(d_{1,t}^2+d_{2,t}^2)}.
    \end{equation}
    First we consider \eqref{eq:M-u+h-expand} again and get the expansion
    \[
        M_t-u_t+h_t = u_t^2(1+\alpha z_t)+O(u_t^3)
    \]
    and hence
    \[
        (M_t-u_t+h_t)^2=u_t^4(1+\alpha z_t)^2(1+O(u_t)).
    \]
    Next we expand $h_t$.
    Along the slow-start trajectory, $y_t$ stays bounded and
    $w_t=y_t u_t^{1+\alpha}$, hence
    \begin{equation}
        \label{eq:h-expand}
        h_t=u_t^2 + w_t^2 = u_t^2(1+y_t^2u_t^{2\alpha})= u_t^2(1+O(u_t^\kappa)).
    \end{equation}
    Since $z_t \to \frac{p}{p+1}$ as $t\to\infty$ and is therefore bounded, we have
    \[
        |v_{2,t}|=\rho_t^{q-1}(1+\rho_t^q)^{-1/p}=(z_t u_t)^{1/p}(1+z_t u_t)^{-1/p} = \Theta(u_t^{1/p})
    \]
    and
    \[
        |w_t|=y_t u_t^{1+\alpha}=O(u_t^{1+\alpha}).
    \]
    By \eqref{eq:d2-abs-expand} and using $1+\alpha - \frac{1}{p} = \kappa$, we have
    \[
        |d_{2,t}|=|v_{2,t}|+|w_t| = |v_{2,t}| \left(1+\frac{|w_t|}{|v_{2,t}|}\right) = |v_{2,t}|(1+O(u_t^\kappa)),
    \]
    and thus
    \[
        d_{2,t}^2 = (z_t u_t)^{2/p}(1+z_t u_t)^{-2/p}\,(1+O(u_t^\kappa)) = \Theta(u_t^{2/p}).
    \]
    By \eqref{eq:d1-expand}, we have $d_{1,t}=O(u_t)$ which together with $2-\frac{2}{p}=\kappa$ yields
    \[
        d_{1,t}^2 + d_{2,t}^2 = d_{2,t}^2 \left(1 + \frac{d_{1,t}^2}{d_{2,t}^2} \right)
        = d_{2,t}^2 \left( 1 + \frac{O(u_t^2)}{\Theta(u_t^{2/p})} \right) = d_{2,t}^2(1+O(u_t^\kappa)).
    \]
    Combining with $h_t=u_t^2(1+O(u_t^\kappa))$ and \eqref{eq:sq-identity} yields
    \begin{align*}
        1-s_t^2
        &= \frac{(M_t-u_t+h_t)^2}{h_t(d_{1,t}^2+d_{2,t}^2)} \\
        &= \frac{u_t^4(1+\alpha z_t)^2(1+O(u_t))}
        {u_t^2\,(z_tu_t)^{2/p}(1+z_tu_t)^{-2/p}(1+O(u_t^\kappa))} \\
        &= \frac{(1+\alpha z_t)^2}{z_t^{2/p}}\,u_t^\kappa\,
        (1+z_tu_t)^{2/p}\,\frac{1+O(u_t)}{1+O(u_t^\kappa)} \\
        &= \frac{(1+\alpha z_t)^2}{z_t^{2/p}}\,u_t^\kappa\,(1+O(u_t)),
    \end{align*}
    where we have used $4-(2+2/p)=2-2/p=\kappa$ in the second last step.
    Equivalently,
    \[
        1-s_t^2 = 2A_0(z_t)\,u_t^\kappa\,(1+O(u_t)),
        \qquad
        A_0(z)\defi \frac{(1+\alpha z)^2}{2z^{2/p}}.
    \]
    Since the right hand side converges to zero as $t\to\infty$, we have $s_t \to 1$ and thus $1+s_t=2+o(1)$.
    Hence,
    \[
        1-s_t = \frac{1-s_t^2}{1+s_t}
        = A_0(z_t)\,u_t^\kappa\,(1+o(1)).
    \]
    Using $z_t=\tfrac{p}{p+1}+o(1)$, we obtain $A_0(z_t)=A_0(\tfrac{p}{p+1})+o(1)$.
    A direct evaluation gives $A_0(\tfrac{p}{p+1})=2(p/(p+1))^{2(p-1)/p}=2C_p^2=a_p$.
    Finally, $r_t=u_t(1+O(u_t^\kappa))$ implies $u_t^\kappa=r_t^\kappa(1+o(1))$, completing the proof.
\end{proof}

\lowerBoundTheorem*
\begin{proof}\linkofproof{thm:lower-bound}
    Fix $p\ge 3$, set $\mathcal C\defi B_p$, and let $f$ be the model in \eqref{eq:model}. For any $x^*\in\arg\min_{x\in\mathcal C }f(x)$, define $h_t\defi f(x_t)-f(x^*)$ along the exact-line-search \gls{fw} trajectory from $x_0=x_0^{\mathrm{slow}}(u_0)$.
    Define $\psi_t\defi h_t^{-\alpha}$.
    Since $\psi_{t+1}=s_t^{-2\alpha}\psi_t$ we have
    \[
    \Delta\psi_t\defi \psi_{t+1}-\psi_t=\psi_t(s_t^{-2\alpha}-1).
    \]
    From \Cref{prop:contraction-ap}, we have $1-s_t=a_pr_t^\kappa(1+o(1))$.
    Since $s_t\to1$, a first-order expansion at $s=1$ gives
    \[
      s_t^{-2\alpha}-1 = 2\alpha(1-s_t)+O((1-s_t)^2).
    \]
    Therefore
    \begin{align*}
      \Delta\psi_t
      &= \psi_t\bigl(s_t^{-2\alpha}-1\bigr) \\
      &= \psi_t\Bigl(2\alpha(1-s_t)+O((1-s_t)^2)\Bigr) \\
      &= 2\alpha a_p\,\psi_t r_t^\kappa(1+o(1))
      + O\!\left(\psi_t r_t^{2\kappa}\right).
    \end{align*}
    Using $r_t^\kappa=h_t^\alpha=\psi_t^{-1}$, we obtain
    \[
      \psi_t r_t^\kappa=1,
      \qquad
      \psi_t r_t^{2\kappa}=r_t^\kappa\to0,
    \]
    and hence
    \[
      \Delta\psi_t = 2\alpha a_p + o(1)=\kappa a_p+o(1).
    \]
    Summing yields $\psi_T\sim \kappa a_p T$, hence
    \[
        h_T=\psi_T^{-1/\alpha}\sim (\kappa a_p T)^{-1/\alpha} = (\kappa a_p T)^{-p/(p-1)}.
    \]
\end{proof}

\begin{proposition}\label{prop:naive-power-extension}\linktoproof{prop:naive-power-extension}
    Let $x_0\in B_p$ and let $(x_t)$ be the exact-line-search \gls{fw} trajectory for $f(x)=\|x-e_1\|_2^2$ started from $x_0$. For any $\theta\in(0,\frac12]$ and $\mu>0$, running exact-line-search \gls{fw} on
    \[
        g(x)=\mu^{-1/\theta}\|x-e_1\|_2^{1/\theta}
        =\mu^{-1/\theta}f(x)^{1/(2\theta)}
    \]
    from the same initialization $x_0$ produces exactly the same iterates $(x_t)$.
\end{proposition}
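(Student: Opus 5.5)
The plan is an induction on $t$: we show that if both runs share the iterate $x_t$, then they make the same oracle call and choose the same step size, so $x_{t+1}$ agrees as well. The base case $t=0$ holds because both runs start from $x_0$. Write $\beta\defi 1/(2\theta)\ge 1$, so that $g=\mu^{-1/\theta}f^{\beta}$. Since $\beta\ge1$ and $f$ is a smooth quadratic, $g$ is $C^1$ on $\R^d$. On $\{x\neq e_1\}$ the chain rule gives
\[
\nabla g(x)=\mu^{-1/\theta}\beta f(x)^{\beta-1}\nabla f(x),
\]
and $\nabla g(e_1)=0=\nabla f(e_1)$ (for $\beta=1$ the two functions differ only by a positive constant factor anyway).

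For the oracle step, take $x_t\neq e_1$. Then $\nabla g(x_t)$ is a strictly positive multiple of $\nabla f(x_t)\neq0$. The minimizer of a linear functional over $B_p$ is invariant under positive rescaling of the functional, and by \cref{lem:lmo-lp} it is unique. Hence both runs produce the same $v_t$, which also removes any ambiguity from tie-breaking in the oracle.

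For the step size, the exact line search for $g$ minimizes $\phi_g(\gamma)\defi g(x_t+\gamma(v_t-x_t))$ over $[0,1]$. We have $\phi_g=\mu^{-1/\theta}(\phi_f)^{\beta}$, where $\phi_f(\gamma)\defi f(x_t+\gamma(v_t-x_t))\ge0$. The map $s\mapsto \mu^{-1/\theta}s^\beta$ is strictly increasing on $[0,\infty)$, so $\phi_g$ and $\phi_f$ have the same set of minimizers on $[0,1]$. Moreover, $\phi_f$ is a quadratic in $\gamma$ with leading coefficient $\norm{v_t-x_t}_2^2$. This coefficient is positive unless $v_t=x_t$, in which case the step has no effect for either function. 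So $\phi_f$ is strictly convex and has a unique minimizer on $[0,1]$. The two $\gamma_t$ therefore coincide, and $x_{t+1}=x_t+\gamma_t(v_t-x_t)$ agrees for both runs, which completes the induction step.

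The main point of care is the degenerate case $x_t=e_1$, where both gradients vanish and the oracle output is not defined by \cref{lem:lmo-lp}. Here we adopt the convention that the algorithm stops, or equivalently that the iterates remain at the optimum. This is consistent for both objectives, since $e_1$ minimizes both $f$ and $g$ and any step away from it increases both. Along the slow-start trajectory this case never occurs, because $u_t>0$ for all $t$ by \Cref{prop:slow-start-tracking}. Beyond this, the argument is just the two monotonicity and scaling invariances above.
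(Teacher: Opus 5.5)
Your proposal is correct and follows the paper's argument: by induction on $t$, the gradient of $g$ is a strictly positive multiple of $\nabla f$, so both runs get the same (unique) oracle atom, and the strictly increasing map $s\mapsto\mu^{-1/\theta}s^{1/(2\theta)}$ leaves the line-search minimizer unchanged. Your extra points only spell out what the paper states briefly: uniqueness of the line-search minimizer through strict convexity of $\phi_f$, and the degenerate case $x_t=e_1$, where $e_1$ minimizes both objectives and so both runs stay put.
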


\begin{proof}\linkofproof{prop:naive-power-extension}
    Let $\theta\in(0,\tfrac12]$ and $\mu>0$, and define $g(x)=\mu^{-1/\theta}f(x)^{1/(2\theta)}$.
    Let $x\neq x^*$. Since $\nabla f(x)=2(x-e_1)$ and $f(x)=\|x-e_1\|_2^2$,
    \[
        \nabla g(x)=\frac{1}{2\theta}\mu^{-1/\theta}f(x)^{1/(2\theta)-1}\nabla f(x).
    \]
    The scalar prefactor is strictly positive, hence
    \[
        \argmin_{v\in B_p}\ip{\nabla g(x),v}
        =
        \argmin_{v\in B_p}\ip{\nabla f(x),v}.
    \]
    So both objectives produce the same \gls{lmo} atom at every non-optimal iterate.
    At $x^*$, both gradients vanish and the method is stationary for both objectives.

    Fix $t$ and denote the common atom by $v_t$. For $\gamma\in[0,1]$,
    \[
        g\!\left(x_t+\gamma(v_t-x_t)\right)
        =
        \mu^{-1/\theta}
        f\!\left(x_t+\gamma(v_t-x_t)\right)^{1/(2\theta)}.
    \]
    Since $f\ge 0$ and $s\mapsto s^{1/(2\theta)}$ is increasing on $\mathbb R_+$,
    \[
        \argmin_{\gamma\in[0,1]} g\!\left(x_t+\gamma(v_t-x_t)\right)
        =
        \argmin_{\gamma\in[0,1]} f\!\left(x_t+\gamma(v_t-x_t)\right).
    \]
    Therefore exact line search yields the same step size $\gamma_t$ for $f$ and $g$.
    By induction on $t$, both runs generate exactly the same sequence $(x_t)$ from the same initialization.
\end{proof}

\extendedLowerBound*
\begin{proof}\linkofproof{thm:extended-lower-bound}
    Fix $p\ge 3$, $\mu>0$ and $\theta\in(0,\tfrac12]$, and set
    \[
        \C\defi B_p,\qquad
        g(x)\defi \mu^{-1/\theta}\|x-e_1\|_2^{1/\theta}.
    \]
    Let $x^*=e_1$. Since $\theta\in(0,\tfrac12]$, $g$ is smooth on $\C$.

    We first verify the $(\mu,\theta)$-\gls{heb}. For any $x\in\C$,
    \[
        g(x)-g(x^*)
        =\mu^{-1/\theta}\|x-e_1\|_2^{1/\theta},
    \]
    hence
    \[
        \mu\bigl(g(x)-g(x^*)\bigr)^\theta
        =\mu\Bigl(\mu^{-1/\theta}\|x-e_1\|_2^{1/\theta}\Bigr)^\theta
        =\|x-e_1\|_2
        =\|x-x^*\|_2.
    \]
    Therefore $\|x-x^*\|_2\le \mu\bigl(g(x)-g(x^*)\bigr)^\theta$ for all $x\in\C$.

    Now consider exact-line-search \gls{fw} initialized as in \Cref{thm:lower-bound}. By \Cref{prop:naive-power-extension}, the trajectory for $g$ coincides with the one for $f(x)=\|x-e_1\|_2^2$. Thus the iterate sequence for $g$ is exactly $(x_t)$ from \Cref{thm:lower-bound}.

    Using $g(x^*)=0$ and \Cref{thm:lower-bound},
    \begin{align*}
        g(x_T)-g(x^*)
        &=\mu^{-1/\theta}\bigl(f(x_T)-f(x^*)\bigr)^{1/(2\theta)}\\
        &\sim
        \mu^{-1/\theta}
        \left(\left(\frac{p+1}{p}\right)^2
        \left(\frac{p}{4(p-1)}\right)^{\frac{p}{p-1}}
        T^{-\frac{p}{p-1}}\right)^{\frac{1}{2\theta}}\\
        &=
        \mu^{-1/\theta}\left(\frac{p+1}{p}\right)^{\frac{1}{\theta}}
        \left(\frac{p}{4(p-1)}\right)^{\frac{p}{2\theta(p-1)}}
        T^{-\frac{p}{2\theta(p-1)}}.
    \end{align*}
    This proves the asymptotic in \eqref{eq:extended-lower-bound}. The $\Omega$ statement follows immediately.
\end{proof}

\end{document}